\newtheorem{theorem}{Theorem}[section]
\newtheorem{definition}{Definition}[section]
\newtheorem{proposition}[theorem]{Proposition}
\newtheorem{remark}[theorem]{Remark}
\newtheorem*{example}{Example}
\numberwithin{equation}{section}
\renewcommand{\c}{\circ}
\renewcommand{\ker}{\textnormal{ker}}
\newcommand{\gyr}{\textnormal{gyr}}
\newcommand{\orb}{\textnormal{Orb}}
\newcommand{\stb}{\textnormal{stab}}
\newcommand{\sym}{\textnormal{Sym}}
\begin{document}
	\title{Gyrogroup through its Grothendieck Group Completion and Right gyrogroup action}

\author{Akshay Kumar$^{1}$,
Mani Shankar Pandey$^{2}$,  Seema Kushwaha$^{3}$ AND \\Sumit Kumar Upadhyay$^{4}$\vspace{.4cm}\\
{Department of Applied Sciences,\\ Indian Institute of Information Technology Allahabad\\Prayagraj, U. P., India} }

\thanks{$^1$mehtaaksh4@gmail.com, $^2$manishankarpandey4@gmail.com, $^3$seema28k@gmail.com, $^4$upadhyaysumit365@gmail.com}
\thanks {2020 Mathematics Subject classification : 20A99; 20C99; 20N05; 05E18}
	
%
	
	\keywords{Gyrogroup; Grothendieck group completion; group action; group representation}
		
	\begin{abstract} In this article, we discuss the Grothendieck group completion (GGC) of a gyrogroup. Consequently, we show that there is a one to one correspondence between actions and representations of a gyrogroup, and actions and representations of its Grothendieck group completion. We also introduce the concept of an action of a right gyrogroup.
	\end{abstract}
	
\maketitle
\section{Introduction}
In 1988,  Ungar introduced a non-associative algebraic structure, {\it gyrogroup,} which is a generalization of groups. 
The concept of gyrogroups first came into the picture
in the study of Einstein’s relativistic velocity addition law \cite{ungar_thomas,ungar_book}. The focus of this article is around the  following problem. 

For a given gyrogroup $G$, does there exists a pair $(M(G),\nu), $ where $M(G)$ is a group and $\nu$ is a gyrogroup homomorphism  from $G$ to $M(G)$ such that for any given group $K$ and a gyrogroup homomorphism $\eta$ from $G$ to $K,$ there exists a unique group homomorphism $\alpha$ from $M(G)$ to $K$ such that  $\alpha\circ\nu=\eta?$

We answer  this question in Section 2 affirmatively by using the idea of Grothendieck which is used in \cite{rjl_weak_classification} and we call it here the Grothendieck group completion (GGC) of a gyrogroup.

In recent articles \cite{suksumran_lagrange,suksumran_isomorphism}, it has been shown that gyrogroups share significant analogy with groups. It is shown that most of the traditional and remarkable results of group theory like Lagrange theorem, the fundamental isomorphism theorem, Cayley theorem are still true for gyrogroups. 
In the continuation of this,  Suksumran \cite{suksumran_gyroaction} explored the notion of gyrogroup actions which turns out to be  a natural generalization of the usual notion of group action. Later, the author \cite{suksumaran_maschke,suksumaran_reducibility,suksumran_repre} generalized the notion of representation of a group for gyrogroups. The author achieved many well known results of group actions in the theory of gyrogroup action and representation. 
He defines an action $*$ of a gyrogroup $(G, \circ)$ on a set $X$ by the requirement that $(a\circ b)*x = a* (b*x) $ and a linear representation $\rho: G \longrightarrow GL(V)$ by putting $\rho(a\circ b) = \rho(a)\c \rho(b)$, for all $a, b \in G$.

In this article, we show that this is equivalent to having an action of the group $M(G)$ on $X$ and a linear representation of $M(G)$ on $V$, where $M(G)$ is the Grothendieck group completion of $G$. Indeed, the category of gyrogroup actions is equivalent to the category of group actions and the category of gyrogroup representations is equivalent to the category of group representations. As such, most of the results are deducible from the known results for groups by exploring the quotient map $\nu : G \longrightarrow M(G)$. We illustrate this in Section 3 and 4 of the paper. 

However, we introduce a more realistic notion of right gyrogroup action/representation and try to develop the theory accordingly. The Section 5 is devoted  to the right gyrogroup action.

Now, we recall the definition of a gyrogroup and its properties which we will use further.

\begin{definition} \label{gyrogroup}  A pair $(G,\c)$ consisting of a non-empty set $G$ and a binary operation $\c$ on $G$ is called a {\it gyrogroup} if its binary operation $\c$ satisfies the following axioms. 
	\begin{enumerate}
		\item There exists an element $e\in G$, called
		a left identity, such that $$e\c a=a\,\,\text{ for\, all } a\in G.$$
\item For each $a\in G,$ there exists an element $a^{-1}\in G$, called a left inverse of $a,$ such that $a^{-1}\c a=e$.
\item For $a,b\in G,$ there exists an automorphism $\gyr[a,b]$ such that 
$$ a\c(b\c c)= (a\c b)\c \gyr[a,b]c$$
for all $a,b,c\in G.$
\item $\gyr[a\c b, b]=\gyr[a,b]$ for all $a,b,c\in G.$
	\end{enumerate} 
	\end{definition}

	\begin{proposition}\cite[Theorem 2.34]{ungar_book}\label{gyro identity}\label{gyro identity_1}
	Let $(G, \c)$ be a gyrogroup. Then
 $\gyr^{-1}[a, b]=\gyr[b,a]$ for all $a,b\in G.$
	\end{proposition}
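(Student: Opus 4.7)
The plan is to prove the equivalent statement $\gyr[a,b]\circ\gyr[b,a]=\id$ (since gyrations are automorphisms, invertibility will then give $\gyr^{-1}[a,b]=\gyr[b,a]$). The strategy is to combine the left gyroassociative law (Axiom 3) with a \emph{right} gyroassociative law of the form $(a\c b)\c c = a\c\bigl(b\c\gyr[b,a]c\bigr)$, which I would first need to derive. Once both identities are available, writing $a\c(b\c c)$ using left gyroassociativity gives $(a\c b)\c\gyr[a,b]c$, and writing $(a\c b)\c\gyr[a,b]c$ using right gyroassociativity gives $a\c\bigl(b\c\gyr[b,a]\gyr[a,b]c\bigr)$. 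Chaining the two and applying left cancellation twice yields $c=\gyr[b,a]\gyr[a,b]c$ for every $c\in G$, which is exactly what we want.

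Before the main argument, I would establish the standard preliminary consequences of the axioms: (i) the left cancellation law $a\c x=a\c y\Rightarrow x=y$; (ii) the left identity $e$ is also a right identity and the left inverse $a^{-1}$ is also a right inverse; and (iii) the trivial gyrations $\gyr[e,a]=\gyr[a,e]=\id$. These facts follow from Axioms 1--3 by short manipulations (e.g., obtaining $e\c(a\c x)=a\c x$ and comparing with $(e\c a)\c \gyr[e,a]x$ to force $\gyr[e,a]=\id$, then chasing inverses).

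The heart of the proof is deriving the right gyroassociative law and the right loop property $\gyr[a,b\c a]=\gyr[a,b]$ from the left loop property $\gyr[a\c b,b]=\gyr[a,b]$ (Axiom 4). My plan is to expand $a\c\bigl(b\c(c\c d)\bigr)$ in two ways using left gyroassociativity: once by first grouping $b\c(c\c d)=(b\c c)\c\gyr[b,c]d$ and once by first grouping $a\c(b\c\bullet)$ and exploiting that $\gyr[a,b]$ is an automorphism. Comparing the two expressions and applying left cancellation produces the compositional identity
\[
\gyr[a,b\c c]\c\gyr[b,c] \;=\; \gyr[a\c b,\gyr[a,b]c]\c\gyr[a,b].
\]
Specializing $c$ appropriately (taking $c=a$ or $c=b^{-1}$ in combination with the left loop property and the trivial gyrations from the preliminary step) collapses the right-hand side and yields both the right loop property and the right gyroassociative law.

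I expect the main obstacle to be precisely this derivation of right gyroassociativity: the challenge is that Axioms 3 and 4 are both asymmetric (stated only on the left), so one must squeeze symmetry out of them without circular use of the identity $\gyr^{-1}[a,b]=\gyr[b,a]$ that we are trying to prove. Careful bookkeeping of which axioms have been used at each step, and resisting the temptation to invert $\gyr[a,b]$ prematurely, will be essential. Once the right gyroassociative law is in hand, the conclusion follows in one line by the cancellation argument sketched in the first paragraph.
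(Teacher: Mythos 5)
The paper gives no proof of this proposition (it is quoted from Ungar's book, Theorem 2.34), so your sketch has to stand on its own, and its skeleton is partly sound: the preliminaries you list are standard, the composition identity $\gyr[a,b\c c]\gyr[b,c]=\gyr[a\c b,\gyr[a,b]c]\gyr[a,b]$ does follow from left gyroassociativity, the automorphism property and left cancellation exactly as you describe, and once the right gyroassociative law $(a\c b)\c c=a\c\bigl(b\c\gyr[b,a]c\bigr)$ is available, the two left cancellations you indicate do give $\gyr[b,a]\gyr[a,b]=I$. The genuine gap is the middle claim, that specializing the composition identity at $c=a$ or $c=b^{-1}$ ``collapses the right-hand side and yields both the right loop property and the right gyroassociative law.'' It does not. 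The collapsing specializations (take $b=a^{-1}$, or $c=b^{-1}$, together with $\gyr[x,x^{-1}]=\gyr[x^{-1},x]=I$ and $\gyr[x,e]=\gyr[e,x]=I$) yield identities such as $\gyr^{-1}[a,b]=\gyr[a^{-1},a\c b]$ and $\gyr^{-1}[a,b]=\gyr[a\c b,(a\c b)^{-1}\c a]$; pushing these further with the left loop property and the gyrosum inversion $(a\c b)^{-1}=\gyr[a,b](b^{-1}\c a^{-1})$ one can reach $\gyr[a,b]=\gyr[(a\c b)^{-1},a]$ and the inversive symmetry $\gyr^{-1}[a,b]=\gyr[b^{-1},a^{-1}]$, but every consequence obtainable this way stops short of the target. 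What is still missing is precisely the even property $\gyr[b^{-1},a^{-1}]=\gyr[b,a]$, equivalently the right loop property $\gyr[a,b\c a]=\gyr[a,b]$, and that does not drop out of any collapsing substitution in the composition identity; it is essentially the content of the theorem.

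Note also that your reduction is not really a reduction: given left gyroassociativity and left cancellation, the right gyroassociative law in the exact form you need is equivalent to $\gyr[a,b]\gyr[b,a]=I$ (substitute $\gyr[b,a]c$ into the left law), and in Ungar's development both the right gyroassociative law and the right loop property are corollaries of Theorem 2.34, proved after it via a longer chain of auxiliary results (gyrosum inversion, the basic equations, nested gyration identities). So all of the difficulty of the proposition is concentrated in the one sentence you leave unproved, and as written the plan is circular in effect: you resist inverting $\gyr[a,b]$ prematurely, but the step you defer to is exactly as strong as the statement being proved. To repair the argument you must either give an honest derivation of the even/right-loop property from the left axioms --- which requires using the left loop property in full strength, not only in the degenerate instances where a gyration becomes trivial --- or follow the standard route and prove the inversion law first, deducing right gyroassociativity afterwards.
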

\begin{proposition}\cite[Theorem 2.13]{ungar_book}\label{gyro identity}\label{gyro identity_2}
	Let $(G, \c)$ be a gyrogroup. Then
	$$\gyr[a\c b,\gyr[a,b](c)]\gyr[a,b]=\gyr[a,b\c c]\gyr[b,c]$$ for all $a,b, c\in G.$
\end{proposition}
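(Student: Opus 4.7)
The plan is to derive the identity by computing the iterated product $a\c(b\c(c\c d))$ in two different ways using the left gyroassociative law (axiom (3) in Definition \ref{gyrogroup}), and then to use the fact that $\gyr[a,b]$ is an automorphism together with left cancellation.

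First I would associate inside-out: applying axiom (3) to $b\c(c\c d)$, I get $b\c(c\c d)=(b\c c)\c\gyr[b,c]d$, and then applying axiom (3) again to $a\c((b\c c)\c\gyr[b,c]d)$ I obtain
$$a\c(b\c(c\c d))=(a\c(b\c c))\c\gyr[a,b\c c]\gyr[b,c]d.$$
Rewriting $a\c(b\c c)$ once more via axiom (3) as $(a\c b)\c\gyr[a,b]c$ gives the first expression
$$a\c(b\c(c\c d))=\big((a\c b)\c\gyr[a,b]c\big)\c\gyr[a,b\c c]\gyr[b,c]d.$$

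Second I would associate outside-in: by axiom (3), $a\c(b\c x)=(a\c b)\c\gyr[a,b]x$ with $x=c\c d$, so
$$a\c(b\c(c\c d))=(a\c b)\c\gyr[a,b](c\c d).$$
Since $\gyr[a,b]$ is an automorphism of $(G,\c)$, we have $\gyr[a,b](c\c d)=\gyr[a,b]c\c\gyr[a,b]d$, and applying axiom (3) one more time,
$$a\c(b\c(c\c d))=\big((a\c b)\c\gyr[a,b]c\big)\c\gyr[a\c b,\gyr[a,b]c]\gyr[a,b]d.$$

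Equating the two resulting expressions and cancelling the common left factor $(a\c b)\c\gyr[a,b]c$ (which is legitimate since left cancellation holds in any gyrogroup), I would conclude
$$\gyr[a,b\c c]\gyr[b,c]d=\gyr[a\c b,\gyr[a,b]c]\gyr[a,b]d.$$
As this holds for every $d\in G$, the identity of the operators follows. The only step requiring minor care is invoking left cancellation and the automorphism property of $\gyr[a,b]$; both are standard consequences of the axioms (the former being a well-known property of gyrogroups, and the latter built into axiom (3)), so the main substantive work is really just organizing the two parenthesizations so that the common prefix matches exactly.
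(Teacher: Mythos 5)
Your proof is correct. The paper itself gives no argument for this proposition---it simply cites Theorem 2.13 of Ungar's book---and your computation of $a\c(b\c(c\c d))$ under the two parenthesizations, followed by the automorphism property of $\gyr[a,b]$ and left cancellation, is exactly the standard proof found there, so nothing further is needed.
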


\begin{proposition}\cite[Proposition 6]{suksumran_isomorphism} \label{l-subgyro} Let $G$ be a gyrogroup and let $X \subseteq G$. Then the following are equivalent\\
1) $\gyr[a, b](X) \subseteq X$ for all $a, b \in G$,\\
2) $\gyr[a, b](X) = X$ for all $a, b \in G$.
\end{proposition}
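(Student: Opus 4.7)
The plan is to prove the nontrivial direction $(1) \Rightarrow (2)$, since $(2) \Rightarrow (1)$ is immediate. Each gyration $\gyr[a,b]$ is an automorphism of $G$, hence in particular a bijection of the underlying set, so the claim $\gyr[a,b](X) = X$ is equivalent to the conjunction of the two inclusions $\gyr[a,b](X) \subseteq X$ and $X \subseteq \gyr[a,b](X)$. The first inclusion is handed to us directly by hypothesis (1); only the reverse inclusion requires work.

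To establish $X \subseteq \gyr[a,b](X)$, I would apply $\gyr[a,b]^{-1}$ to both sides and note that it suffices to show $\gyr[a,b]^{-1}(X) \subseteq X$. Here is where Proposition \ref{gyro identity_1} comes in: it tells us that $\gyr^{-1}[a,b] = \gyr[b,a]$. Thus $\gyr[a,b]^{-1}(X) = \gyr[b,a](X)$, and the hypothesis (1) applied to the swapped pair $(b,a)$ instead of $(a,b)$ yields $\gyr[b,a](X) \subseteq X$. Applying the bijection $\gyr[a,b]$ then gives $X \subseteq \gyr[a,b](X)$, completing the proof of equality.

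The entire argument hinges on the single observation that the family of gyrations is closed under taking inverses in the sense encoded by Proposition \ref{gyro identity_1}, together with the fact that hypothesis (1) is a quantified statement over \emph{all} pairs $(a,b) \in G \times G$, so we are free to use it for the pair $(b,a)$ as well. There is no real obstacle here, as no deeper gyrogroup identity (in particular, no use of Proposition \ref{gyro identity_2}) is needed; the whole matter reduces to a two-line set-theoretic argument once the identity $\gyr^{-1}[a,b] = \gyr[b,a]$ is invoked.
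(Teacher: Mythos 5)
Your proof is correct: since the gyrations are bijections and $\gyr^{-1}[a,b]=\gyr[b,a]$ (Proposition \ref{gyro identity_1}), applying the hypothesis to the swapped pair $(b,a)$ and then the bijection $\gyr[a,b]$ yields $X\subseteq\gyr[a,b](X)$, which together with (1) gives equality. Note that this paper does not reprove the proposition but simply cites it from \cite[Proposition 6]{suksumran_isomorphism}; your argument is exactly the standard one underlying that cited result, so there is nothing further to reconcile.
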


In this article, most of the terminologies and definitions related to gyrogroup actions are adopted from \cite{suksumran_gyroaction}.

	\section{Grothendieck group completion of a gyrogroup}
	In this section, we construct the Grothendieck group completion (GGC) of a gyrogroup. 
	
Let $(G,\c)$ be a gyrogroup. For $a,b\in G$, the map $\gyr[a,b]$ given by 
$$\gyr[a,b]z=(a\c b)^{-1}\c (a\c(b\c z))$$
belongs to $\sym(G)$. We know that $\gyr[a,b]e=e$, therefore, $\gyr[a,b]\in \sym(G\setminus\{e\})$. 
\begin{proposition}\label{identities}
	Let $f\in \sym(G\setminus\{e\})$ and $a\in G$. Define $\sigma_a(f):G\setminus\{e\}\to G\setminus\{e\}$ by the equation
	$f(a)\c\sigma_a(f)(b)=f(a\c b),$ for all $b\in G\setminus\{e\}$. Then
	\begin{enumerate}
		\item $\sigma_a(f)\in \sym(G\setminus\{e\})$ for all $a\in G$ and $f\in \sym (G\setminus\{e\})$;
		\item $\sigma_a(I)=I,$ where $I$ is the identity map on $G\setminus\{e\}$;
		\item $(f^{-1}(a))^{-1}=\sigma_{a}(f^{-1})(a^{-1})$;
		\item $\sigma_a(\gyr[b,c])=\gyr[b,c],$ for every $a\in G$;
		\item $\sigma_a(f_1f_2)=\sigma_{f_2(a)}(f_1)\sigma_a(f_2)$;
		\item $(\sigma_a(f))^{-1}=\sigma_{f(a)}(f^{-1})$;
		
		\item $\gyr[f(a), \sigma_{a}(f)(a^{-1})]=I$;
		\item $\sigma_{(a\c b)}(f)\gyr[a,b]=\gyr[f(a),\sigma_a(f)(b)]\sigma_b(\sigma_a(f)).$
	\end{enumerate}
	\begin{proof}
		Statement $(1)$, $(2), (3)$ and $(4)$ are clear from the definition. Now we will give the proof of identity (5).
		
		Let $f_1,f_2\in \sym(G\setminus\{e\})$ and $a\in G.$ For $z\in G,$
		\begin{align*}	
			f_1f_2(a)\c 	\sigma_a(f_1f_2)(z)&= f_1f_2(a\c z)\\
			f_1f_2(a)\c 	\sigma_a(f_1f_2)(z) & =f_1(f_2(a\c z))\\
			f_1f_2(a)\c 	\sigma_a(f_1f_2)(z)&=f_1(f_2(a)\c \sigma_{a}(f_2)(z))\\
			f_1f_2(a)\c 	\sigma_a(f_1f_2)(z)&=f_1f_2(a)\c \sigma_{f_2(a)}(f_1)(\sigma_a(f_2)(z))\\
			\sigma_a(f_1f_2)(z)&= \sigma_{f_2(a)}(f_1)\sigma_a(f_2)(z)
		\end{align*}
Therefore, $\sigma_a(f_1f_2)=\sigma_{f_2(a)}(f_1)\sigma_a(f_2).$ Statement $(6)$ follows from $(2)$ and $(5)$ by taking $f_2=f$ and $f_1=f^{-1}.$
		Statement (7) follows from (3). Now we will give the proof of identity (8).
		\begin{align*}f(a\c b)\c \sigma_{a\c b}(f)(z)&=f((a\c b)\c z))\\
			&=f(a\c (b\c \gyr[b,a]z))\\
			&= f(a)\c \sigma_a(f)(b\c \gyr[b,a](z))\\
			&= f(a)\c (\sigma_a(f)(b)\c \sigma_b(\sigma_a(f))(\gyr[b,a](z)))\\
			&=(f(a)\c \sigma_a(f)(b))\c \gyr[f(a), \sigma_a(f)(b)](\sigma_b[\sigma_a(f)]\gyr[b,a](z))\\
			&=f(a\c b)\c \gyr[f(a), \sigma_a(f)(b)](\sigma_b[\sigma_a(f)]\gyr[b,a](z))\\
			\sigma_{a\c b}(f)z&=\gyr[f(a), \sigma_a(f)(b)](\sigma_b[\sigma_a(f)]\gyr[b,a](z)) \textnormal{(by left cancellation)}\\
	\Rightarrow		\sigma_{a\c b}(f) \gyr[a,b]&=\gyr[f(a), \sigma_a(f)(b)](\sigma_b[\sigma_a(f)]).
		\end{align*}

	\end{proof}
\end{proposition}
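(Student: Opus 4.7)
The plan is to treat each $f\in\sym(G\setminus\{e\})$ as extended to $G$ by $f(e)=e$, so that the defining relation $f(a)\c\sigma_a(f)(b)=f(a\c b)$ determines $\sigma_a(f)(b)$ uniquely via left cancellation for every $b\in G$. Left cancellation is the workhorse throughout: whenever both sides have the form $x\c y_1$ versus $x\c y_2$, we strip off $x$ and compare the remainders.

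For (1), injectivity of $\sigma_a(f)$ is inherited from injectivity of $f$ via left cancellation; surjectivity follows by running the defining equation backward using surjectivity of $f$; and $\sigma_a(f)(b)\ne e$ for $b\ne e$ since $\sigma_a(f)(b)=e$ would force $a\c b=a$ and hence $b=e$. Statement (2) is immediate after substituting $f=I$. Statement (3) is the defining equation applied to $f^{-1}$ at the pair $(a,a^{-1})$, using $f^{-1}(e)=e$. Statement (4) reflects the fact that $\gyr[b,c]$ is an automorphism, so $\gyr[b,c](a\c z)=\gyr[b,c](a)\c\gyr[b,c](z)$ already matches the template of the defining equation with $\sigma_a(\gyr[b,c])(z)=\gyr[b,c](z)$.

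For (5), I would expand $f_1f_2(a\c z)$ first by the definition of $\sigma_a(f_2)$ and then by the definition of $\sigma_{f_2(a)}(f_1)$, compare with the direct expansion $f_1f_2(a)\c\sigma_a(f_1f_2)(z)$, and cancel. Statement (6) then follows by taking $f_1=f^{-1}$, $f_2=f$ in (5) and invoking (2). Statement (7) follows from (3) applied with $f$ in place of $f^{-1}$, which yields $\sigma_a(f)(a^{-1})=(f(a))^{-1}$, together with the standard gyrogroup identity $\gyr[x,x^{-1}]=I$.

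The real obstacle is (8). The plan is to compute $f((a\c b)\c z)$ in two different ways. The direct route gives $f(a\c b)\c\sigma_{a\c b}(f)(z)$. The indirect route uses the gyroassociative axiom together with $\gyr^{-1}[a,b]=\gyr[b,a]$ from Proposition \ref{gyro identity_1} to rewrite $(a\c b)\c z=a\c(b\c\gyr[b,a]z)$; then successive applications of the definitions of $\sigma_a(f)$ and $\sigma_b(\sigma_a(f))$, followed by one final use of gyroassociativity to pull the outer $\c$ past $\sigma_a(f)(b)$, produce $f(a\c b)\c\gyr[f(a),\sigma_a(f)(b)](\sigma_b(\sigma_a(f))(\gyr[b,a]z))$. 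Equating the two expressions, left-cancelling $f(a\c b)$, and composing on the right by $\gyr[a,b]$ (using $\gyr[b,a]\gyr[a,b]=I$) yields the claimed identity. The only real subtlety I expect is bookkeeping in this last string of gyroassociativity moves, keeping straight which gyroautomorphism is applied on which side at each step.
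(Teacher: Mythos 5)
Your proposal is correct and follows essentially the same route as the paper: (5) by expanding $f_1f_2(a\c z)$ twice and left-cancelling, (6) from (2) and (5), (7) from (3) together with $\gyr[x,x^{-1}]=I$, and (8) by evaluating $f((a\c b)\c z)$ two ways via gyroassociativity and $\gyr[b,a]=\gyr^{-1}[a,b]$, then left-cancelling $f(a\c b)$ and composing with $\gyr[a,b]$. The extra detail you supply for (1)--(4) (including the convention $f(e)=e$) only fleshes out what the paper dismisses as clear from the definition.
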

\noindent
Set $G^*= G\setminus\{e\}$. Let $G\times \sym(G^*)$ denote the Cartesian product of $G$ and $\sym(G^*)$. We define a binary operation $\cdot$ on $G\times \sym(G^*)$ given by
$$(a,f)\cdot (b,g)=(a\c f(b), \gyr[a,f(b)]\sigma_b(f)g).$$

\begin{theorem} The groupoid $(G\times \sym(G^*),\,\cdot\,)$ is a group.
\end{theorem}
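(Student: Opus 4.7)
The plan is to verify the three group axioms directly on $(G\times\sym(G^*),\cdot)$. Associativity is the substantive step; the identity and inverses will then fall out of the $\sigma$-identities of Proposition~\ref{identities} together with the standard facts $\gyr[a,a^{-1}]=I$ and $\gyr[a^{-1},a]=I$ from gyrogroup theory.

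For associativity I would expand $((a,f)\cdot(b,g))\cdot(c,h)$ and $(a,f)\cdot((b,g)\cdot(c,h))$ and compare the two coordinates separately. The first coordinates are easy: the defining relation $f(b\c g(c))=f(b)\c \sigma_b(f)(g(c))$ together with left gyroassociativity of $\c$ gives
$$a\c f(b\c g(c)) \;=\; (a\c f(b))\c \gyr[a,f(b)]\bigl(\sigma_b(f)(g(c))\bigr),$$
which matches the first coordinate of the other side.

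The second coordinate is where the real work lies, and it is the step I expect to be the main obstacle. My plan is to simplify the left-hand side by applying identity (5) of Proposition~\ref{identities} to push $\sigma_c$ across the triple product $\gyr[a,f(b)]\sigma_b(f)g$, and then apply identity (4) to collapse $\sigma_{\sigma_b(f)(g(c))}(\gyr[a,f(b)])$ down to $\gyr[a,f(b)]$. On the right-hand side I would use identity (8) to rewrite the block $\sigma_{b\c g(c)}(f)\gyr[b,g(c)]$ as $\gyr[f(b),\sigma_b(f)(g(c))]\sigma_{g(c)}(\sigma_b(f))$. After these manipulations all of the $\sigma$ factors and the trailing $h$ cancel, and the required equality reduces to
$$\gyr[a\c f(b),\,\gyr[a,f(b)]\sigma_b(f)(g(c))]\,\gyr[a,f(b)] \;=\; \gyr[a,\,f(b)\c\sigma_b(f)(g(c))]\,\gyr[f(b),\sigma_b(f)(g(c))],$$
which is precisely Proposition~\ref{gyro identity_2} applied to the triple $(a,f(b),\sigma_b(f)(g(c)))$.

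For the identity element I expect $(e,I)$ to work, where $I$ is the identity map on $G^*$ extended by $I(e)=e$: the check $(e,I)\cdot(a,f)=(a,f)$ uses only $\gyr[e,\,\cdot\,]=I$ and statement (2) of Proposition~\ref{identities}, while $(a,f)\cdot(e,I)=(a,f)$ follows from $f(e)=e$ together with the immediate observation $\sigma_e(f)=f$ read off the defining equation. For inverses I would propose $(a,f)^{-1}=(f^{-1}(a^{-1}),\sigma_{a^{-1}}(f^{-1}))$; one direction falls out of identity (5) applied to $f\cdot f^{-1}=I$ combined with $\gyr[a,a^{-1}]=I$, and the other reduces, via identity (3) and left cancellation, to the equality $\sigma_a(\sigma_{a^{-1}}(f^{-1}))=f^{-1}$, which I would verify by unwinding the defining equation of $\sigma$ and invoking $\gyr[a^{-1},a]=I$.
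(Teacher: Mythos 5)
Your proposal is correct and follows essentially the same route as the paper: associativity is reduced, via identities (4), (5) and (8) of Proposition~\ref{identities}, to exactly the gyration identity of Proposition~\ref{gyro identity_2}, and your candidates $(e,I)$ for the identity and $(f^{-1}(a^{-1}),\sigma_{a^{-1}}(f^{-1}))$ for the inverse are the ones the paper uses. The only differences are bookkeeping: the paper applies the inverted form of identity (8) (using $\gyr^{-1}[a,b]=\gyr[b,a]$) on the other side of the associativity computation, and cites identities (6) and (7) where you re-derive $\sigma_a(\sigma_{a^{-1}}(f^{-1}))=f^{-1}$ by unwinding the defining equation.
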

\begin{proof}
\textbf{Associativity:} 	Let $(a,f), (b,g), (c,h)\in G\times \sym(G^*).$
	Then
	\begin{align*}
		(a,f)\cdot((b,g)\cdot(c,h))&=\left(a,f)\cdot(b\c g(c), \gyr[b,g(c)]\sigma_c(g)h\right)\\
		&= \left(a\c f(b\c g(c)), \gyr[a,f(b\c g(c))] \sigma_{b\c g(c)}(f)\gyr[b,g(c)]\sigma_c(g)h\right )
	\end{align*}
	and 
	\begin{align*}
		\left(	(a,f)\cdot((b,g)\right)\cdot(c,h))&=\left(a\c f(b),  \,\gyr[a,f(b)]\sigma_b(f)g\right)\c (c,h)\\
		&= \left((a\c f(b))\c \gyr[a,f(b)]\sigma_b(f)g(c),\, \tau )\right )\\
		&= (a\c (f(b)\c  \sigma_b(f)g (c),\, \tau )\\
		&= (a\c f(b\c g(c)),\,\tau )
	\end{align*}
	where $\tau=\gyr\left[a\c f(b), \gyr[a,f(b)]\sigma_b(f)g(c)\right]\sigma_c(\gyr[a,f(b)]\,\sigma_b(f)g)h.$
	Note that 
	\begin{align*}
		\sigma_c(\gyr[a,f(b)]\sigma_b(f)g)&=\gyr[a,f(b)]\,\sigma_c(\sigma_b(f)g)~~~~~~~~\textnormal{by Proposition \ref{identities}(5)}\\
		&=\gyr[a,f(b)]\,\sigma_{g(c)}(\sigma_b(f))\sigma_c(g)
	\end{align*}
	By Proposition \ref{identities}(8), $\sigma_{g(c)}(\sigma_b(f))=\gyr[\sigma_b(f)(g(c)), f(b)] \sigma_{b\c g(c)}(f)\gyr[b,g(c)]$ so that
		\begin{equation*}
		\sigma_c(\gyr[a,f(b)]\sigma_b(f)g)=\gyr[a,f(b)]\,  \gyr[\sigma_b(f)(g(c)), f(b)] \sigma_{b\c g(c)}(f)\gyr[b,g(c)]   \sigma_c(g).
	\end{equation*}
It gives 
\begin{align*}
	\tau=\gyr\left[a\c f(b), \gyr[a,f(b)]\sigma_b(f)g(c)\right]\, \gyr[a,f(b)]\gyr[\sigma_b(f)(g(c)), f(b)] \atop \sigma_{b\c g(c)}(f)\gyr[b,g(c)]   \sigma_c(g)h.
\end{align*}
Using Proposition \ref{gyro identity_2}, we get
$$\gyr\left[a\c f(b), \gyr[a,f(b)]\sigma_b(f)g(c)\right]\gyr[a,f(b)]\,\gyr[\sigma_b(f)(g(c)), f(b)] =\gyr[a,f(b\c g(c))].$$
Therefore,
	$$\tau= \gyr[a,f(b\c g(c))]\sigma_{b\c g(c)}(f)\gyr[b,g(c)])\sigma_c(g))h$$
Hence the operation is associative.

\noindent\textbf{Existence of Identity:} 	The pair $(e, I)$ is the identity element.
	
\noindent\textbf{Existence of Inverse:} Let $(a,f)\in G\times\sym(G^*)$. Then

	\begin{eqnarray*}
		(a,f)\cdot (f^{-1}(a^{-1}), \sigma_{a^{-1}}(f^{-1}))&=&(a\c ff^{-1}(a^{-1}), \gyr[a,ff^{-1}(a^{-1})]\sigma_{f^{-1}(a^{-1})}(f)\sigma_{a^{-1}}(f^{-1}))\\
		&=&(e, \sigma_{f^{-1}(a^{-1})}(f)\sigma_{a^{-1}}(f^{-1})) \textnormal{ since $\gyr[a,a^{-1}]=I$}\\
		&=&(e,I) \textnormal{(by Statement (6) of Proposition \ref{identities})}
	\end{eqnarray*}
	Similarly, 	$ (f^{-1}(a^{-1}), \sigma_{a^{-1}}(f^{-1}))\cdot (a,f)= (c,g)$, where
	$c=f^{-1}(a^{-1})\c \sigma_{a^{-1}}(f^{-1})(a)$ and $g=\gyr[f^{-1}(a^{-1}), \sigma_{a^{-1}}(f^{-1})(a)]\sigma_a(\sigma_{a^{-1}}(f^{-1}))(f)=\gyr[f^{-1}(a^{-1}), \sigma_{a^{-1}}(f^{-1})(a)]$. Then, by Proposition \ref{identities}(3), $c=e$  and by Proposition \ref{identities}(7) $g=I.$ Hence, The element $(f^{-1}(a^{-1}), \sigma_{a^{-1}}(f^{-1}))$ is the inverse of $(a,f).$
	
This shows that $(G\times\sym(G^*),\,\cdot\,)$ is a group.
	
	
\end{proof}
\noindent \textbf{Grothendieck group completion}
We identify $\sym(G^*)$ in the group $G\times \sym(G^*)$ with the subgroup $\{e\}\times \sym(G^*)$. Also, we identify the set $G$ in the group $G\times \sym(G^*)$ with $G\times\{I\}$.  Let  $R(G)$ denote  the subgroup of $\sym(G^*)$ generated by 
$\{\gyr[a,b]\mid a,b\in G\}.$ Then the subgroup of $G\times \sym(G^*)$ corresponding to $R(G)$ is  $\{e\}\times R(G)$ and the subgroup $\langle G\rangle$ generated by $G$ in $G\times \sym(G^*)$ is $ G\times R(G)$ which is denoted by  $G R(G)$. 

Consider the group $M(G) = \frac{G R(G)}{[R(G)]}$, where $[R(G)]$ denote the smallest normal subgroup of $ G\times R(G)$ containing $R(G)$. We have a natural quotient homomorphism $\nu : G \longrightarrow M(G)$ given by $\nu (a) = a[R(G)]$. 
 The pair $(M(G), \nu)$ is called the Grothendieck  group completion (GGC) of $G$. By the definitions of $\nu$ and $M(G)$, we observe the following result.

\begin{theorem}
	Let $G$ be a gyrogroup and $M(G)$ be the GGC of $G$. Let $K$ be any group and $\eta: G \to K$ be a gyrogroup homomorphism. Then there exists a unique homomorphism $\alpha: M(G)\to K$ such that $\alpha\circ \nu=\eta,$ that is, the following diagram commutes.
		\[
	\begin{tikzcd}
		G\arrow{r}{\nu} \arrow[swap]{d}{\eta} & M(G) \arrow{dl}{\exists \alpha} \\
		K&  
	\end{tikzcd}
	\]
\end{theorem}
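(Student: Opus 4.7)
The plan is to build $\alpha$ by first constructing a group homomorphism $\tilde{\alpha}: GR(G) \to K$ on the whole subgroup $G \times R(G)$ and then descending to the quotient $M(G)$.

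First I would define $\tilde{\alpha}: G \times R(G) \to K$ by the formula $\tilde{\alpha}(a,f) = \eta(a)$, ignoring the gyration component. To check this is a homomorphism, I unpack
\[
\tilde{\alpha}\bigl((a,f)\cdot(b,g)\bigr) = \tilde{\alpha}\bigl(a\c f(b),\, \gyr[a,f(b)]\sigma_b(f)g\bigr) = \eta(a\c f(b)) = \eta(a)\,\eta(f(b)),
\]
so the required identity reduces to showing $\eta(f(b)) = \eta(b)$ for every $f \in R(G)$ and every $b \in G$. Since $R(G)$ is generated by the gyrations, it suffices to verify this on a single gyration. Using the defining formula $\gyr[x,y]b = (x\c y)^{-1}\c (x\c(y\c b))$, the fact that $\eta$ is a gyrogroup homomorphism into a group, and that group homomorphisms send inverses to inverses, a direct computation gives
\[
\eta(\gyr[x,y]b) = \eta(x\c y)^{-1}\eta(x)\eta(y)\eta(b) = \eta(b).
\]
This is the only substantive calculation in the argument, and it is the step I expect to be the main (mild) obstacle, since one has to be careful that the gyrogroup homomorphism property transfers cleanly to inverses despite the non-associativity of $\c$.

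Once $\tilde{\alpha}$ is confirmed to be a homomorphism, I would observe that for any $f \in R(G)$, $\tilde{\alpha}(e,f) = \eta(e) = e_K$, so the identified copy $\{e\}\times R(G)$ of $R(G)$ lies in $\ker\tilde{\alpha}$. Because kernels are normal, the smallest normal subgroup $[R(G)]$ containing $R(G)$ is also contained in $\ker\tilde{\alpha}$, and hence $\tilde{\alpha}$ factors uniquely through the quotient $M(G) = GR(G)/[R(G)]$ to yield a homomorphism $\alpha: M(G) \to K$. The relation $\alpha\circ\nu = \eta$ is then immediate from $\alpha(a[R(G)]) = \tilde{\alpha}(a,I) = \eta(a)$.

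For uniqueness, I would invoke the fact already recorded in the text that $G$ generates $GR(G)$ inside $G\times\sym(G^*)$, so $\nu(G)$ generates $M(G)$. Any group homomorphism $M(G) \to K$ agreeing with $\alpha$ on $\nu(G)$ must therefore agree on all of $M(G)$, which gives the required uniqueness and completes the verification of the universal property.
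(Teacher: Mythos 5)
Your proposal is correct, and it supplies exactly the verification the paper omits: the paper's "proof" consists only of the remark "by the definitions of $\nu$ and $M(G)$", and your route—checking $\eta(\gyr[x,y]b)=\eta(b)$ so that $(a,f)\mapsto\eta(a)$ is a group homomorphism on $GR(G)$ that kills $R(G)$ and hence $[R(G)]$, descending to $\alpha$ on $M(G)$, and getting uniqueness because $\nu(G)$ generates $M(G)$—is the intended argument. The only points worth making explicit are that the property $\eta(f(b))=\eta(b)$ passes from the generators $\gyr[x,y]$ to all of $R(G)$ because it is preserved under composition and inversion (immediate, and $\gyr^{-1}[x,y]=\gyr[y,x]$ is itself a gyration), and that one tacitly extends each $f\in\sym(G^*)$ to $G$ by fixing $e$, a convention the paper already uses.
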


\begin{theorem}\label{p-gyro}
	Let $G$ be a gyrogroup of order $p$,where $p$ is a prime. Then  $M(G)$ is non-trivial.
\end{theorem}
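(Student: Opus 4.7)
My plan is to reduce the statement to the known fact that every gyrogroup of prime order is in fact a cyclic group; once $G$ has an associative operation, the subgroup $R(G)$ of $\sym(G^*)$ generated by all gyrations collapses to $\{I\}$ and the construction of $M(G)$ recovers $G$ itself.

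First I would pick a non-identity element $a\in G$ and consider the cyclic subgyrogroup $\langle a\rangle$. This is an L-subgyrogroup of $G$ in the sense of Proposition \ref{l-subgyro}, so by Suksumran's Lagrange theorem for gyrogroups (cited in the introduction) its order divides $p$; since $a\neq e$ forces $|\langle a\rangle|\geq 2$, we obtain $\langle a\rangle=G$. The cyclic subgyrogroup on a single generator is known to carry a genuinely associative operation---equivalently, all gyrations between powers of $a$ are identity maps---so $(G,\c)$ is a cyclic group of order $p$.

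Now the gyroassociative axiom $a\c(b\c c)=(a\c b)\c\gyr[a,b]c$ combined with honest associativity forces $\gyr[a,b]=I$ for every $a,b\in G$, so $R(G)=\{I\}$. Hence $GR(G)=G\times\{I\}$, the normal closure $[R(G)]$ is trivial, and $M(G)=GR(G)/[R(G)]\cong G$, which has order $p\geq 2$---in particular non-trivial. An equivalent shortcut skips the direct computation: once $G$ is known to be a group, the identity map $G\to G$ is itself a gyrogroup homomorphism into a group and must factor through $\nu$ by the universal property just proved, forcing $\nu$ to be injective and hence $M(G)$ non-trivial.

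The only substantive step is the preparatory claim that a gyrogroup of prime order is cyclic; this is not immediate from the axioms but follows from Lagrange for L-subgyrogroups together with the triviality of gyrations on a singly-generated subgyrogroup. Both ingredients are standard in the references cited by the paper, so I expect this to present no real obstacle beyond quoting them properly.
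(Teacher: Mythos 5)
Your reduction runs the paper's logic backwards. The fact you lean on --- that a gyrogroup of prime order is a cyclic group --- is exactly the Suksumran--Wiboonton theorem that this paper re-derives at the end of Section 3 \emph{as a consequence of} Theorem \ref{p-gyro}: the whole point of Theorem \ref{p-gyro} is to establish non-triviality of $M(G)$ without any prior structure theory of $G$, and then to obtain ``prime order $\Rightarrow$ cyclic group'' from it via $G/\ker(\nu)\cong M(G)$. If you import that structure theorem from \cite{suksumran_lagrange}, then of course $R(G)=\{I\}$ and $M(G)\cong G\cong \Z_p$, but the statement becomes an empty reformulation and, within this paper, the argument is circular. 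Moreover, your own sketch of the imported theorem has a concrete hole: to apply Lagrange you assert that $\langle a\rangle$ is an L-subgyrogroup, but Proposition \ref{l-subgyro} only records the equivalence of the two invariance conditions $\gyr[x,y](X)\subseteq X$ and $\gyr[x,y](X)=X$; it gives no reason why the cyclic subgyrogroup generated by $a$ should be invariant under all gyrations $\gyr[x,y]$ with $x,y\in G$, and the gyrogroup Lagrange theorem is available only for L-subgyrogroups. So the ``only substantive step'' you defer to the references is not in fact covered by the ingredients you quote. (The subsequent steps --- that a singly generated gyrogroup is associative, that associativity kills all gyrations, and your shortcut via the universal property forcing $\nu$ to be injective once $G$ is a group --- are fine; the problem is how you get that $G$ is a group.)

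The paper's proof is different in kind and uses no structure theory of $G$ at all: it works inside the group $GR(G)\le G\times\sym(G^*)$, observes $|GR(G)|=pk$ where $k=|R(G)|$ divides $(p-1)!$ and hence is coprime to $p$, and argues that the normal closure $[R(G)]$ cannot equal $GR(G)$ because it is built from conjugates $ghg^{-1}$ of elements $h\in R(G)$, whose orders divide $k$, while $[R(G)]=GR(G)$ would (by Cauchy) force an element of order $p$ in $[R(G)]$; hence $M(G)=GR(G)/[R(G)]$ is non-trivial. (Even there one should be careful: the normal closure is \emph{generated} by such conjugates rather than equal to the set of them, so that step needs more than the paper literally says; but the counting framework is the intended self-contained route.) If you want a proof consistent with the paper's purpose, you should argue along these lines inside $GR(G)$ rather than presuppose the prime-order structure theorem that the paper derives from this very result.
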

\begin{proof} Since $|G|=p.$ Then the order of the group $GR(G)$ is $pk$, where $k$ is the order of $R(G)$ and $k\mid (p-1)!$. It is clear that $\text{gcd}(p,k)=1.$ Now we claim that $[R(G)]\ne GR(G).$ If equality holds, then $[R(G)]$ has an element of order $p$. But any element of $[R(G)]$ is of the form $ghg^{-1},$ where $g\in GR(G)$ and $h\in R(G).$ Note that $|ghg^{-1}|\mid |h|$ and $|h|\mid k$. Therefore, $[R(G)]$ has no element of order $p$ and hence, $[R(G)]\ne GR(G).$  Therefore, $M(G)$ is non-trivial.
\end{proof}

\begin{proposition}\label{P1}
Let $G_1$ and $G_2$ be two gyrogroups and $\phi$ be a gyrogroup homomorphism from $G_1$ to $G_2$. Then there exists a group homomorphim $\hat{\phi}$ from the Grothendieck group completion $M(G_1)$ of $G_1$ to the Grothendieck group completion $M(G_2)$ of $G_2.$
	\end{proposition}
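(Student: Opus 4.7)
The natural plan is to invoke the universal property of the Grothendieck group completion established in the theorem immediately preceding this proposition. Let $\nu_1: G_1 \to M(G_1)$ and $\nu_2: G_2 \to M(G_2)$ denote the canonical maps. I would first verify that $\nu_2: G_2 \to M(G_2)$ is itself a gyrogroup homomorphism, and then form the composite $\eta := \nu_2 \circ \phi: G_1 \to M(G_2)$. Since $M(G_2)$ is a group and the composite of a gyrogroup homomorphism with a gyrogroup homomorphism into a group is again a gyrogroup homomorphism, the universal property applied to the pair $(M(G_1), \nu_1)$ yields a unique group homomorphism $\hat{\phi}: M(G_1) \to M(G_2)$ with $\hat{\phi} \circ \nu_1 = \nu_2 \circ \phi$, which is the desired map.

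The main point to check is that $\nu_2$ is indeed a gyrogroup homomorphism, i.e., that $\nu_2(a \c b) = \nu_2(a)\nu_2(b)$ for all $a,b \in G_2$. Using the identification of $G_2$ with $G_2 \times \{I\}$ inside $G_2 R(G_2)$, the product in $G_2 R(G_2)$ gives $(a,I)\cdot (b,I) = (a \c b,\, \gyr[a,b])$. Passing to the quotient by $[R(G_2)]$ forces $(e, \gyr[a,b])$ to become trivial, so $(a\c b,\gyr[a,b])$ and $(a\c b, I)$ become equal in $M(G_2)$; hence $\nu_2(a)\nu_2(b) = \nu_2(a\c b)$. This is exactly the mechanism by which the construction kills gyrations to produce a genuine group.

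The main obstacle, as often with universal property arguments, is making sure the hypotheses of the universal theorem are met; here this reduces to the short verification above. Once $\nu_2 \circ \phi$ is seen to be a gyrogroup homomorphism into a group, the existence and uniqueness of $\hat{\phi}$ are immediate. Functoriality (i.e.\ $\widehat{\psi \circ \phi} = \hat{\psi}\circ\hat{\phi}$ and $\widehat{\mathrm{id}_{G}} = \mathrm{id}_{M(G)}$), while not asserted in the proposition, also follows from the uniqueness clause and would be worth mentioning as a remark.
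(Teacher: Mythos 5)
Your proposal is correct and takes essentially the same route as the paper: form the composite $\nu_2 \circ \phi : G_1 \to M(G_2)$ and apply the universal property of $(M(G_1),\nu_1)$ to obtain the unique group homomorphism $\hat{\phi}$ with $\hat{\phi}\circ \nu_1 = \nu_2 \circ \phi$. Your additional verification that $\nu_2$ is a gyrogroup homomorphism (the gyration $(e,\gyr[a,b])$ being killed in the quotient by $[R(G_2)]$) is left implicit in the paper's construction of the quotient map, so it merely makes the argument more self-contained.
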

	
	\begin{proof}
		Suppose $\phi:G_1\longrightarrow G_2$ is a gyrogroup homomorphism and $\nu_i:G_i\longrightarrow M(G_i),\ a\mapsto a[R(G_i)], \ i=1,2$ be the natural quotient maps. Then we have a map $\nu_2 \circ \phi$ from the $G_1$ to $M(G_2)$. Therefore by universal property of the Grothendieck group completion of gyrogroups there exists a group homomorphism $\hat{\phi}:M(G_1)\longrightarrow M(G_2)$ such that $\hat{\phi} \circ \nu_1=\nu_2 \circ \phi.$ 
		\[
		\begin{tikzcd}
		G_1 \arrow{r}{\phi} \arrow[swap]{d}{\nu_1} & G_2 \arrow{d}{\nu_2} \\
		M(G_1)  \arrow{r}{\hat{\phi}} & M(G_2)
		\end{tikzcd}
		\]
	
		\end{proof}

Consider the category $GR$ of gyrogroups and the category $GP$ of groups. Define the functor $$F_1 : GR\longrightarrow GP \ \text{by} \ F_1(G) = M(G).$$ If $G_1,\ G_2$ are two members of the object class of $GR$ and $\phi\in Mor(G_1,G_2)$ then the morphism between $F_1(G_1)$ and  $F_1(G_2)$ is given by
$F_1(\phi)=\hat{\phi}$, where $\hat{\phi}:M(G_1)\longrightarrow M(G_2)$ is a group homomorphism defined in Proposition \ref{P1}.

Also, we have a forgetful functor $$F_2 : GP \longrightarrow GR \ \text{by}\  F_2(\mathcal{G}) = \mathcal{G}_F,$$ where $\mathcal{G}_F$ is treated as gyrogroup.
\begin{theorem}
$F_1$ is a left adjoint functor to $F_2$.
\end{theorem}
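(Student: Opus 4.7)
The plan is to exhibit, for every gyrogroup $G$ and every group $\mathcal{G}$, a bijection
$$\Phi_{G,\mathcal{G}}:\mathrm{Hom}_{GP}(M(G),\mathcal{G})\longrightarrow \mathrm{Hom}_{GR}(G,F_2(\mathcal{G}))$$
that is natural in both variables, since this is exactly the data of an adjunction $F_1\dashv F_2$.

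First I would define $\Phi_{G,\mathcal{G}}(\alpha)=\alpha\c\nu_G$, where $\nu_G:G\to M(G)$ is the canonical quotient map. This is well-defined as a gyrogroup homomorphism because $\nu_G$ is one, and any group homomorphism $\alpha$ automatically becomes a gyrogroup homomorphism under $F_2$. The bijectivity of $\Phi_{G,\mathcal{G}}$ is precisely the content of the universal property theorem stated just before Proposition \ref{P1}: given any gyrogroup homomorphism $\eta:G\to F_2(\mathcal{G})$, there exists a unique group homomorphism $\alpha:M(G)\to\mathcal{G}$ with $\alpha\c\nu_G=\eta$. Existence gives surjectivity of $\Phi_{G,\mathcal{G}}$, and uniqueness gives its injectivity.

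For naturality, I would take arbitrary $\phi:G_1\to G_2$ in $GR$ and $\psi:\mathcal{G}_1\to\mathcal{G}_2$ in $GP$, and verify that the square whose left vertical map sends $\alpha\mapsto \psi\c\alpha\c F_1(\phi)$ and whose right vertical map sends $\eta\mapsto F_2(\psi)\c\eta\c\phi$ commutes under $\Phi$. For any $\alpha:M(G_2)\to\mathcal{G}_1$, one side evaluates to $\psi\c\alpha\c\hat{\phi}\c\nu_{G_1}$ while the other evaluates to $\psi\c\alpha\c\nu_{G_2}\c\phi$. Equality then follows immediately from the defining identity $\hat{\phi}\c\nu_{G_1}=\nu_{G_2}\c\phi$ established in Proposition \ref{P1}, together with the fact that $F_2$ acts as the identity on underlying set-theoretic maps.

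There is no real obstacle here: all the hard work has already been performed in constructing $M(G)$, verifying its universal property, and defining $F_1$ on morphisms via Proposition \ref{P1}. The adjunction statement is a purely formal consequence; the argument amounts to recognising $\nu_G$ as the unit of the adjunction at $G$ and translating the universal property into the adjoint bijection, with naturality reducing to a short diagram chase.
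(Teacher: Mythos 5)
Your proposal is correct and follows essentially the same route as the paper: define $\Phi_{G,\mathcal{G}}(\alpha)=\alpha\circ\nu$, get bijectivity from the universal property of $(M(G),\nu)$, and check naturality via the identity $\hat{\phi}\circ\nu_1=\nu_2\circ\phi$ from Proposition \ref{P1}. The only (cosmetic) difference is that you derive injectivity from the uniqueness clause of the universal property, whereas the paper appeals to cancellation against the surjective map $\nu$; both amount to the same fact.
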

\begin{proof}
Let $G\in GR$ and $\mathcal{G} \in GP$. Define $$\Phi_{G,\mathcal{G}}: Hom(F_1(G), \mathcal{G}) \longrightarrow Hom(G, F_2(\mathcal{G}))$$ $$\text{by}\ \Phi_{G,\mathcal{G}} (\eta) = \eta \circ \nu,\ \forall \ \eta \in Hom(F_1(G), \mathcal{G}).$$ Then we claim that the map $\Phi_{G,\mathcal{G}}$ is bijective.
Let $\eta, \chi \in Hom(F_1(G), \mathcal{G})$ such that $$\Phi_{G,\mathcal{G}}(\eta)=\Phi_{G,\mathcal{G}}(\chi).$$ Then,
$\eta \circ \nu=\chi \circ \nu$, by right cancellation law $\eta=\chi.$ Hence, $\Phi_{G,\mathcal{G}}$ is one-one. Now if $\phi\in Hom(G, F_2(\mathcal{G}))$, then by universal property of completion of Grothendieck groups, there exists $\hat{\phi}\in Hom(F_1(G), \mathcal{G})$ such that $\Phi_{G,\mathcal{G}}(\hat{\phi})=\phi $.

Now, let $G,G' \in GR$, $f \in Hom(G',G)$ and $\mathcal{G}_1,\ \mathcal{G}_2 \in GP$ and $g \in Hom(\mathcal{G}_1, \mathcal{G}_2)$.  Then we have two maps $$\Psi_{G,G'}:Hom(F_1(G),\mathcal{G}_1)\longrightarrow Hom(F_1(G'),\mathcal{G}_2)$$ and $$\Psi_{\mathcal{G}_1,\mathcal{G}_2}:Hom(G,F_2(\mathcal{G}_1))\longrightarrow Hom(G',F_2(\mathcal{G}_2))$$ defined by
\begin{center}
$\Psi_{G,G'}(\eta)=g\circ \eta \circ F_1(f)$ \\
$\Psi_{\mathcal{G}_1,\mathcal{G}_2}(\tau)=g\circ \tau \circ f$,
\end{center}

\textbf{Claim:} $\Phi_{G',\mathcal{G}_2}\circ \Psi_{G,G'}=\Psi_{\mathcal{G}_1,\mathcal{G}_2}\circ \Phi_{G,\mathcal{G}_1}$, i.e. the following diagram is commutative:
\[\begin{tikzcd}
Hom(F_1(G),\mathcal{G}_1) \arrow{r}{\Phi_{G,\mathcal{G}_1}} \arrow[swap]{d}{\Psi_{G,G'}} & Hom(G,F_2(\mathcal{G}_1)) \arrow{d}{\Psi_{\mathcal{G}_1,\mathcal{G}_2}} \\
Hom(F_1(G'),\mathcal{G}_2) \arrow{r}{\Phi_{G',\mathcal{G}_2}} & Hom(G',F_2(\mathcal{G}_2)),
\end{tikzcd}
\] 
where $$\Phi_{G',\mathcal{G}_2}: Hom(F_1(G'), \mathcal{G}_2) \longrightarrow Hom(G', F_2(\mathcal{G}_2))$$ defined by $\Phi_{G',\mathcal{G}_2} (\eta') = \eta' \circ \nu'$, $\eta' \in Hom(F_1(G'),\mathcal{G}_2)$ and $\nu' : G' \longrightarrow M(G')$ is the quotient map.

For this, let $\eta \in Hom(F_1(G),\mathcal{G}_1)$. Then 
$$\Phi_{G',\mathcal{G}_2}(\Psi_{G,G'}(\eta))=\Phi_{G',\mathcal{G}_2}(g \circ \eta \circ F_1(f))$$
$$= g\circ \eta \circ F_1(f) \circ \nu' = g\circ \eta \circ \nu \circ f \ (\text{since}\ F_1(f) \circ \nu' = \nu \circ f)$$
  $$\Longrightarrow \Phi_{G',\mathcal{G}_2}(\Psi_{G,G'}(\eta)) = \Psi_{\mathcal{G}_1,\mathcal{G}_2}(\eta \circ \nu)=\Psi_{\mathcal{G}_1,\mathcal{G}_2}(\Phi_{G,\mathcal{G}_1} (\eta) ).$$
  Thus $\Phi_{G',\mathcal{G}_2}\circ \Psi_{G,G'}=\Psi_{\mathcal{G}_1,\mathcal{G}_2}\circ \Phi_{G,\mathcal{G}_1}$.
This proves that $F_1$ is left adjoint to $F_2$.
\end{proof}

\section{Gyrogroup action through its GGC}
In this section, we show that there is a one to one correspondence between the category of gyrogroup actions and the category of group actions.
\begin{proposition}\label{L-subgyrogroup}
Let $G_1$ be a gyrogroup, $G_2$ be a group and $f: G_1 \longrightarrow G_2$ be a gyrogroup homomorphism. Then $f^{-1}(H)$ is a L-subgyrogroup of $G_1$, where $H$ is a subgroup of $G_2$.
\end{proposition}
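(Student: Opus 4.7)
The plan is to verify the two parts of the definition of an L-subgyrogroup directly: first, that $f^{-1}(H)$ is itself a subgyrogroup of $G_1$, and second, that $\gyr[a,b](f^{-1}(H))\subseteq f^{-1}(H)$ for every $a,b\in G_1$. Both parts rest on the observation that a gyrogroup homomorphism $f$ into a group $G_2$ collapses completely: since the gyroautomorphisms of the target are trivial, $f$ satisfies $f(a\circ b)=f(a)f(b)$, $f(a^{-1})=f(a)^{-1}$ and $f(e)=e$ for all $a,b\in G_1$.

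For the subgyrogroup conditions I would check routinely that $e\in f^{-1}(H)$ because $f(e)$ is the identity of $G_2$ and hence in $H$; that if $a,b\in f^{-1}(H)$ then $f(a\circ b)=f(a)f(b)\in H$, giving closure under $\circ$; and that if $a\in f^{-1}(H)$ then $f(a^{-1})=f(a)^{-1}\in H$, giving closure under inverses.

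The main step is gyroinvariance. For arbitrary $a,b\in G_1$ and $x\in f^{-1}(H)$, I apply $f$ to the defining identity
\[\gyr[a,b](x)=(a\circ b)^{-1}\circ\bigl(a\circ(b\circ x)\bigr).\]
Because $f$ lands in a group and is a gyrogroup homomorphism, the right hand side maps to $(f(a)f(b))^{-1}f(a)f(b)f(x)=f(x)\in H$, so in fact $f(\gyr[a,b](x))=f(x)$. Thus every gyroautomorphism of $G_1$ permutes each fiber of $f$ setwise, which in particular yields $\gyr[a,b](f^{-1}(H))\subseteq f^{-1}(H)$; by Proposition \ref{l-subgyro} the inclusion automatically improves to equality, and the same computation restricted to $a,b\in f^{-1}(H)$ supplies the remaining subgyrogroup closure axiom.

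I do not anticipate a substantive obstacle: the entire statement reduces to the basic fact that a group-valued gyrogroup homomorphism trivializes every gyroautomorphism, after which both the subgyrogroup and the L-property fall out by pulling back subgroup closure through $f$.
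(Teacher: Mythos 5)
Your proposal is correct and follows essentially the same route as the paper: both arguments reduce to showing $f(\gyr[a,b](c))=f(c)$ by expanding the gyration and using associativity in the group $G_2$, then invoke Proposition \ref{l-subgyro} to upgrade the inclusion to the L-subgyrogroup property, with the subgyrogroup check treated as routine.
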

\begin{proof}
It is easy to see that $f^{-1}(H)$ is a subgyrogroup of $G_1$. Now we show that $\gyr[a, b](f^{-1}(H))\subseteq f^{-1}(H)$.

Let $c \in f^{-1}(H)$. Then $f(c) \in H$. Now,
\begin{align*}
f(\gyr[a, b](c)) &= f(-(a+b) + (a+(b+c)))\\
&= f(-(a+b)) + f(a+(b+c))\\
&= -f(a+b) + f(a) + f(b+c)\\
&= -f(b) - f(a) +f(a) + f(b) +f(c)\\
& = f(c) ~(\text{by associativity in the group})
\end{align*}
Thus, $f(\gyr[a, b](c)) \in H \Rightarrow \gyr[a, b](c) \in f^{-1}(H)$.  By Proposition \ref{l-subgyro}, $f^{-1}(H)$ is a L-subgyrogroup of $G_1$.
\end{proof}
\begin{remark}
Let $G$ be a gyrogroup, $M(G)$ be its Grothendieck group completion and $\nu: G \longrightarrow M(G)$ be the natural quotient homomorphism. By Proposition \ref{L-subgyrogroup}, $\nu^{-1}(H)$ is a L-subgyrogroup of $G$, where $H$ is a subgroup of $M(G)$.
\end{remark}

\begin{theorem}\label{equivalent action} Let $X$ be any non-empty set, $G$ be a gyrogroup and $M(G)$ be its Grothendieck group completion. Then there is a one-one correspondence between the set of all gyrogroup actions of $G$ on $X$ and  the set of all  group actions of $M(G)$ on $X.$
\end{theorem}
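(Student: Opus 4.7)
The plan is to reduce both notions to their Hom-descriptions and then invoke the universal property of $(M(G),\nu)$ proved in Section 2. A gyrogroup action $*$ of $G$ on $X$ is precisely the data of a gyrogroup homomorphism $\phi_* : G \to \sym(X)$, where $\sym(X)$ is viewed as a group and hence (via the forgetful functor $F_2$) as a gyrogroup with trivial gyrations: set $\phi_*(a)(x) = a*x$, and conversely $a *_\phi x = \phi(a)(x)$. The action axioms $e*x=x$ and $(a\c b)*x = a*(b*x)$ translate respectively into $\phi_*(e)=\id$ and $\phi_*(a\c b)=\phi_*(a)\c\phi_*(b)$, which is exactly the gyrogroup-homomorphism condition since gyrations in $\sym(X)$ are trivial. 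Similarly, a group action of $M(G)$ on $X$ is the same datum as a group homomorphism $\psi : M(G) \to \sym(X)$.

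With this dictionary in hand the correspondence is immediate. Given a gyrogroup action $*$, the universal property applied to $\phi_* : G \to \sym(X)$ yields a unique group homomorphism $\hat\phi_* : M(G) \to \sym(X)$ with $\hat\phi_* \c \nu = \phi_*$, and this defines a group action $\cdot_*$ of $M(G)$ on $X$ by $m\cdot_* x = \hat\phi_*(m)(x)$. Conversely, given a group action encoded by $\psi : M(G) \to \sym(X)$, the composition $\psi\c\nu : G \to \sym(X)$ is a gyrogroup homomorphism (composition of a gyrogroup homomorphism with a group homomorphism regarded as a gyrogroup homomorphism), giving a gyrogroup action $a*_\psi x = (\psi\c\nu)(a)(x)$.

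I then check that these two assignments are mutually inverse. Starting from $*$, forming $\hat\phi_*$, and pulling back along $\nu$ recovers $\hat\phi_* \c \nu = \phi_*$ by construction, so the re-extracted gyrogroup action equals $*$. In the opposite direction, starting from $\psi$ and setting $\phi := \psi \c \nu$, the universal property provides a unique group homomorphism $\hat\phi : M(G) \to \sym(X)$ with $\hat\phi \c \nu = \phi$; but $\psi$ itself already satisfies $\psi \c \nu = \phi$, so the uniqueness clause of the universal property forces $\hat\phi = \psi$, and the re-extracted group action coincides with the original.

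The only step requiring any care is the translation between gyrogroup actions and gyrogroup homomorphisms into $\sym(X)$ — in particular noting that the action axiom automatically forces the gyration subgroup $R(G)$ to act trivially, which is precisely what allows the factorization through $M(G) = GR(G)/[R(G)]$. Once that identification is made, the theorem is essentially a restatement of the universal property established in the previous section, and no substantive technical obstacle remains.
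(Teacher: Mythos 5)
Your proposal is correct and follows essentially the same route as the paper: both encode a gyrogroup action of $G$ (resp.\ a group action of $M(G)$) as a homomorphism into $\sym(X)$ and then apply the universal property of $(M(G),\nu)$ in each direction. In fact you go slightly further than the paper by explicitly checking, via the uniqueness clause, that the two assignments are mutually inverse — a step the paper leaves implicit.
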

\begin{proof}
	Let $\cdot : G \times X \longrightarrow X$ be an action of $G$ on a  set $X$. Then we have a gyrogroup homomorphism $\eta : G \longrightarrow \sym(X)$. Now by universal property of $M(G)$, we have a unique group homomorphism $\alpha : M(G)  \longrightarrow \sym(X)$ such that $\alpha\circ \nu = \eta$. That is we have an action of $M(G)$ on $X$. 
	
	Conversely, if we take an action of $M(G)$ on $X$ that is a group homomorphism $\alpha : M(G)  \longrightarrow \sym(X)$, then we have a gyrogroup homomorphism $\alpha \circ\nu : G \longrightarrow \sym(X)$. That means we have an action of $G$ on the set $X$.
\end{proof}

In the following result, the notations are the same as in Theorem 3.3.
\begin{theorem}\label{equivalence} For $x \in X$, we have the following:
\begin{enumerate}
\item $\orb_G (x) = \orb_{M(G)} (x)$.
\item $\textnormal{Fix}_G (X) = \textnormal{Fix}_{M(G)} (X)$.
\item  Let $Y\subseteq X.$ Then $GY= Y\Leftrightarrow M(G)Y=Y.$ 
\item $\stb_G (x) = \nu^{-1}(\stb_{M(G)} (x))$. Moreover, $\nu(\stb_G (x)) = \stb_{M(G)} (x)$.
\item There is a bijective map between $G/\stb_G (x)$ and $M(G)/\stb_{M(G)} (x)$.
\item The gyrogroup $G$ acts transitively on $X$ if and only if $M(G)$ acts transitively on $X.$
\item The sets $X$ and $Z$ are equivalent under the action of $G$ if and only if  $X$ and $Z$ are equivalent under the action of $M(G)$.

\end{enumerate}
\end{theorem}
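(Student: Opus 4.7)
The plan is to push every assertion through the factorization $\eta = \alpha \circ \nu$ obtained in Theorem~\ref{equivalent action}: for $a \in G$ and $x \in X$, the gyrogroup action $a \cdot x$ coincides with the group action $\nu(a) \cdot x$. The key preliminary is that $\nu : G \to M(G)$ is surjective. Indeed, every $(a, r) \in GR(G) = G \times R(G)$ factors as $(a, I) \cdot (e, r)$ in $GR(G)$ (using $a \c e = a$, $\gyr[a,e] = I$, and $\sigma_e(I) = I$), and $(e, r)$ lies in the normal subgroup $[R(G)]$, so its class in $M(G) = GR(G)/[R(G)]$ equals $\nu(a)$. Hence $M(G) = \nu(G)$.

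With this in hand, parts (1), (2), (3), (6) and (7) reduce to replacing the quantifier ``for all/some $a \in G$'' by ``for all/some $m \in M(G)$'' via the substitution $a \cdot x = \nu(a) \cdot x$: for instance $\orb_G(x) = \{a \cdot x : a \in G\} = \{\nu(a) \cdot x : a \in G\} = \orb_{M(G)}(x)$ by surjectivity, and fixed points, invariant subsets, transitivity, and the intertwining condition $f(a \cdot x) = a \cdot f(x)$ of (7) all go through identically. For (4) I would compute $a \in \stb_G(x) \iff \nu(a) \cdot x = x \iff \nu(a) \in \stb_{M(G)}(x)$, giving $\stb_G(x) = \nu^{-1}(\stb_{M(G)}(x))$, and then apply $\nu$ using surjectivity to obtain $\nu(\stb_G(x)) = \stb_{M(G)}(x)$.

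Part (5) is the only place where something beyond routine bookkeeping intervenes: one first checks that $\stb_G(x)$ is an L-subgyrogroup---which follows from Proposition~\ref{L-subgyrogroup} applied to $\stb_G(x) = \nu^{-1}(\stb_{M(G)}(x))$---so that $G/\stb_G(x)$ is a well-defined gyrogroup coset space, and then uses Suksumran's orbit-stabilizer bijection $G/\stb_G(x) \to \orb_G(x)$, $a\,\stb_G(x) \mapsto a \cdot x$, together with its classical counterpart $M(G)/\stb_{M(G)}(x) \to \orb_{M(G)}(x)$ and part (1) to splice them into a bijection between the two coset spaces. The main conceptual point throughout is simply that the $G$-action on $X$ and the $M(G)$-action on $X$ carry the very same information, packaged through the surjection $\nu$, so that every invariant built from one action agrees with the corresponding invariant built from the other.
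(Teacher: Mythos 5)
Your proposal is correct, and for parts (1)--(4), (6), (7) it is essentially the paper's own argument: everything is pushed through the factorization $\eta=\alpha\circ\nu$ from Theorem~\ref{equivalent action} together with surjectivity of $\nu$, with (4) proved by exactly the same chain of equivalences. Two points of genuine difference are worth noting. First, you actually prove that $\nu$ is surjective, via the factorization $(a,r)=(a,I)\cdot(e,r)$ with $(e,r)\in[R(G)]$ (using $a\c e=a$, $\gyr[a,e]=I$, $\sigma_e(I)=I$); the paper simply asserts surjectivity, so this is a welcome addition. Second, for part (5) you take a different route: you note via Proposition~\ref{L-subgyrogroup} that $\stb_G(x)=\nu^{-1}(\stb_{M(G)}(x))$ is an L-subgyrogroup and then splice Suksumran's gyrogroup orbit--stabilizer bijection $G/\stb_G(x)\to\orb_G(x)$ with the classical bijection $M(G)/\stb_{M(G)}(x)\to\orb_{M(G)}(x)$ through the identification of orbits in part (1). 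The paper instead constructs the bijection directly, defining $\bar{\nu}\bigl(g\,\stb_G(x)\bigr)=\nu(g)\,\stb_{M(G)}(x)$ and checking well-definedness and bijectivity from (4) and surjectivity of $\nu$. Your version is mathematically sound since Suksumran's theorem (\cite[Theorem 3.9]{suksumran_gyroaction}) is an established result, but it cuts against the paper's program: in the remark following the theorem, the authors deduce that very orbit--stabilizer theorem for gyrogroups from parts (1) and (5), so within the paper's logic your argument for (5) would be circular, whereas the direct construction of $\bar{\nu}$ keeps (5) independent of Suksumran's result. If you want your proof to serve the same purpose as the paper's, replace the splicing argument by the explicit map $\bar{\nu}$ and verify bijectivity from (4).
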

\begin{proof}
\begin{enumerate}
\item Let $y \in \orb_G (x)$. Then $y = \eta(a)(x)$, for some $a\in G$. This implies that $y = \alpha\nu(a)(x)= \alpha(\nu(a))(x)$. This shows that $\orb_G (x) \subseteq \orb_{M(G)} (x)$. Conversely, let $y \in \orb_{M(G)} (x)$. Then $y = \alpha(b)(x)$, for some $b\in M(G)$. Since $\nu$ is surjective, $\nu(a)= b$ for some $a\in G$. Thus $y = \alpha(\nu(a))(x) \Rightarrow y = \eta(a)(x)$. This shows that $\orb_G (x) \subseteq \orb_{M(G)} (x)$. This shows that $\orb_{M(G)} (x) \subseteq \orb_G (x)$. Hence, $\orb_G (x) = \orb_{M(G)} (x)$.

\item Let $y \in \textnormal{Fix}_G (X)$. Then $y = \eta(a)(y)$, for all $a\in G$. 
Let $b\in M(G).$ Then $\alpha(b)(y)=\eta(c)(y)=y$ for some $c\in G$ such that $\nu(c)=b$. Therefore, $y\in \textnormal{Fix}_{M(G)}(X).$ Conversely, let $y\in \textnormal{Fix}_{M(G)}(X)$. Then $y=\alpha(b)(y)$ for all $b\in M(G)$. Let $a\in G$. Then $\eta(a)(y)=\alpha(\nu(a))(y)=y$ as $\nu(a)\in M(G).$ Thus, $\textnormal{Fix}_G (X) = \textnormal{Fix}_{M(G)} (X)$.

\item \begin{align*}
	\{\eta(a)(y)\mid a\in G\}= Y&\Leftrightarrow 	\{\alpha(\nu(a))(y)\mid a\in G\}= Y\\
&\Leftrightarrow 	\{\alpha(b)(y)\mid b\in M(G)\}= Y, \textnormal {since $\nu$ is surjective}.
\end{align*}

\item \begin{align*}
a \in \stb_G (x) &\Leftrightarrow \eta(a)(x) = x\\
a \in \stb_G (x) &\Leftrightarrow \alpha(\nu(a))(x) = x\\
a \in \stb_G (x) &\Leftrightarrow \nu(a) \in \stb_{M(G)} (x)\\
a \in \stb_G (x) &\Leftrightarrow a \in \nu^{-1}(\stb_{M(G)} (x))
\end{align*}
Since $\nu$ is surjective, $\nu(\nu^{-1}(\stb_{M(G)} (x))) = \stb_{M(G)}(x)$. This implies that $\nu(\stb_G (x)) = \stb_{M(G)} (x)$.
\item Define $\bar{\nu}: G/\stb_G (x) \longrightarrow M(G)/\stb_{M(G)} (x)$ by $\bar{\nu}(g+\stb_G (x)) = \nu(g) + \stb_{M(G)} (x)$. By using (2), it is easy to see that $\bar{\nu}$ is a bijective map. 

\item Let $x,y\in X.$ 
\begin{align*}
G \textnormal{ acts transitively on } X&\Leftrightarrow 
y=\eta(a)(x) \textnormal{ for some $a\in G$}\\
&\Leftrightarrow y=\alpha(\nu (a))(x) \textnormal{ for some $a\in G$}\\
&\Leftrightarrow y=\alpha(b)(x) \textnormal{ for some $b\in M(G)$}\\
&\Leftrightarrow M(G) \textnormal{ acts transitively on } X
\end{align*}
\item  Let $\phi:X\to Z$ be a bijective map.  Then 
\begin{align*}
	\phi(\eta(a)(x))=\eta(a)(\phi(x))&\Leftrightarrow 	\phi(\alpha(\nu(a))(x))=\alpha(\nu(a))(\phi(x)) \textnormal{ $\forall\, a\in G$ and $x\in X$}\\
	&\Leftrightarrow \phi(\alpha(b)(x))=\alpha(b)(\phi(x)) \textnormal{  $\forall\,b\in M(G)$ and $x\in X$}
\end{align*} 
\end{enumerate}
\end{proof}
\begin{remark}
		\begin{enumerate}
		\item By orbit-stabilizer theorem of group action, we have a bijective correspondence between  $\orb_{M(G)} (x)$ and  $M(G)/\stb_{M(G)} (x)$. By using Theorem \ref{equivalence} (1) and (5), we get a bijective correspondence between  $\orb_G (x)$ and  $G/\stb_G (x)$ which is the orbit-stabilizer theorem of gyrogroup action \cite[Theorem 3.9]{suksumran_gyroaction}.
		
		Suppose $G$ is a finite gyrogroup. Then $M(G)$ is a finite group. Therefore, 
		$|M(G)| = |\orb_{M(G)} (x)| |\stb_{M(G)} (x)| \Rightarrow |G| = |\orb_{G} (x)| |\stb_{G} (x)|$.
		
		\item Let $x_1, x_2,\dots, x_k$ be representatives for the distinct nonsingleton orbits in $X$ by action of $M(G)$. Then by orbit-decomposition theorem for group action, we have
		$$|X|=|\textnormal{Fix}_{M(G)}(X)|+\sum_{i=1}^k[G:\textnormal{stab}_{M(G)}(x_i)].$$
		By using Theorem \ref{equivalence} (2) and (5), we obtain orbit-decomposition theorem for gyrogroup action \cite[Theorem 3.10]{suksumran_gyroaction}, that is,
			$$|X|=|\textnormal{Fix}_{G}(X)|+\sum_{i=1}^k[G:\textnormal{stab}_{G}(x_i)].$$
		
	\item 	By Burnside lemma for groups, the number of distinct orbits in $X$ by action of $M(G)$ is
\begin{align*}
	 \frac{1}{|M(G)|}\sum_{b\in M(G)}|\textnormal{fix}_{M(G)} (b)|
\end{align*}
	It is easy to see that $$|G|=|\ker (\nu)|\, |M(G)| \textnormal{ and } \sum_{b\in M(G)}|\textnormal{fix}_{M(G)} (b)|=\frac{1}{|\ker(\nu)|}\sum_{a\in G}|\textnormal{fix}_{G} (a)|.$$
	
	\noindent Therefore, the Burnside lemma for gyrogroup action holds true \cite[Theorem 3.11]{suksumran_gyroaction}.  
	
\item The Lagrange theorem  in gyrogroups are the consequences of the Theorem \ref{equivalence}.
		
		\item If gyrogroup action of $G$ on $X$ is faithful, then the corresponding group action of $M(G)$ on $X$ is also faithful. Similarly,  if gyrogroup action of $G$ on $X$ is sharply transitive, then the corresponding group action of $M(G)$ on $X$ is also sharply transitive. In both the cases, converse is true if $\nu$ is injective, that is, $G=M(G).$
	\end{enumerate}

\end{remark}

The following result is done by T. Suksumran and K. Wiboonton in \cite{suksumran_lagrange}. Here, we obtain the result by using GGC.  

\begin{theorem}
	If $G$ is a gyrogroup of prime order $p$, then $G$ is a cyclic group of order $p$ under gyrogroup operation.
\end{theorem}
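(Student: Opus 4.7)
The plan is to exploit the universal property of the Grothendieck group completion map $\nu : G \to M(G)$ to embed $G$ into a group, and then to use the fact that a gyrogroup isomorphic to a group must itself be a group. Since a group of prime order is automatically cyclic, this yields the desired conclusion.

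The first and main step is to prove that $\nu$ is injective. By Proposition \ref{L-subgyrogroup}, the preimage $\ker(\nu) = \nu^{-1}(\{e_{M(G)}\})$ is an L-subgyrogroup of $G$. Applying the Lagrange theorem for L-subgyrogroups of Suksumran and Wiboonton \cite{suksumran_lagrange}, its order divides $|G| = p$ and is therefore $1$ or $p$. If $|\ker(\nu)| = p$, then $\nu$ is the trivial homomorphism, so $\nu(G) = \{e_{M(G)}\}$; but $M(G)$ is generated as a group by $\nu(G)$, since by construction $GR(G)$ is generated in $G\times\sym(G^*)$ by $G\times\{I\}$. This would force $M(G)$ to be trivial, contradicting Theorem \ref{p-gyro}. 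Hence $\ker(\nu) = \{e\}$ and $\nu$ is injective.

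Once injectivity is in hand, $\nu(G)$ is a nonempty finite subset of the group $M(G)$ closed under the group operation, because $\nu(a)\nu(b) = \nu(a \c b) \in \nu(G)$, and therefore is a subgroup of $M(G)$. Thus $\nu : G \to \nu(G)$ is a bijective gyrogroup homomorphism onto a group. Any gyrogroup homomorphism $\phi$ satisfies the intertwining identity $\gyr[\phi(a), \phi(b)]\,\phi = \phi\,\gyr[a, b]$, which follows at once from the gyroassociative law together with left cancellation. Since every gyration in the group $\nu(G)$ is the identity, the injectivity of $\nu$ forces $\gyr[a, b] = I$ in $G$ for all $a, b \in G$. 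Consequently $(G, \c)$ is associative; together with its left identity and left inverses this makes $G$ a group of prime order $p$, hence cyclic.

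The only delicate point is verifying that invoking Lagrange for L-subgyrogroups is not circular: the remark following Theorem \ref{equivalence} derives Lagrange from the GGC framework, so one must point to Suksumran and Wiboonton's intrinsic, GGC-independent proof in \cite{suksumran_lagrange}. A completely GGC-internal alternative is also available: pull back the action of $M(G)$ on itself by left translation to a gyrogroup action of $G$ on $M(G)$ via $\nu$ and note that, by Theorem \ref{equivalence}(1), the $G$-orbit of $e_{M(G)}$ equals the $M(G)$-orbit of $e_{M(G)}$, which is all of $M(G)$; since this $G$-orbit is precisely $\nu(G)$, surjectivity of $\nu$ follows, and combining with $p \mid |M(G)|$ (extracted from the proof of Theorem \ref{p-gyro} via Cauchy applied to $GR(G)$) gives bijectivity without citing Lagrange at all.
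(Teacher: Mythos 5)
Your proposal is correct and takes essentially the same route as the paper: nontriviality of $M(G)$ (Theorem \ref{p-gyro}) together with the Lagrange-type count forcing $\ker\nu$ to be trivial, so that $G$ is identified with the group $M(G)$ of order $p$ and is therefore cyclic. The extra details you supply (the gyration-triviality argument showing $G$ itself is associative, the care about non-circular citation of \cite{suksumran_lagrange}, and the Lagrange-free counting variant) are sound refinements of the same argument rather than a different approach.
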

\begin{proof}
	By Theorem \ref{p-gyro}, $M(G)$ is non-trivial and hence, $1<|M(G)|\le p$.  Note that $G/\ker(\nu)\cong M(G)$ and $|\ker(\nu)|$ is 1 or $p.$ If $|\ker(\nu)|=p$, the $M(G)$ is trivial, which is not possible. Therefore, $\ker(\nu)$ is trivial so that $G\cong M(G).$ Hence, $G$ s a cyclic group of order $p$ under gyrogroup operation.
\end{proof}
\section{Gyrogroup representation through its GGC}
Now, we discuss the representation of a gyrogroup. Let $G$ be a gyrogroup, $M(G)$ be its GCC and $V$ be a vector space over an algebraically closed field $k$.

\begin{theorem}\label{equivalent representation} Let $X$ be any non-empty set, $G$ be a gyrogroup and $M(G)$ be its Grothendieck group completion. Then there is a one-one correspondence between the set of all gyrogroup representation $G \longrightarrow GL(V)$ and  the set of all  group representation of $M(G)\longrightarrow GL(V)$.
\end{theorem}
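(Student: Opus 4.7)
The plan is to mirror the proof of Theorem \ref{equivalent action}, simply replacing the symmetric group $\sym(X)$ with the general linear group $GL(V)$ and invoking the universal property of the Grothendieck group completion, which holds for any target group.

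First, I would start with a gyrogroup representation $\rho: G \longrightarrow GL(V)$. By the definition recalled in the introduction, such a $\rho$ is precisely a gyrogroup homomorphism into $GL(V)$, where $GL(V)$ is viewed as a gyrogroup through its underlying group structure (trivial gyrations). Since $GL(V)$ is a group, the universal property of $M(G)$ yields a unique group homomorphism $\tilde{\rho}: M(G) \longrightarrow GL(V)$ satisfying $\tilde{\rho}\circ \nu = \rho$; this $\tilde{\rho}$ is by definition a group representation of $M(G)$ on $V$.

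Conversely, given a group representation $\sigma: M(G) \longrightarrow GL(V)$, the composite $\sigma \circ \nu: G \longrightarrow GL(V)$ is a gyrogroup homomorphism, since $\nu: G \longrightarrow M(G)$ is a gyrogroup homomorphism and any group homomorphism is automatically a gyrogroup homomorphism between the associated gyrogroups. Hence $\sigma \circ \nu$ is a gyrogroup representation of $G$ on $V$. This gives the two assignments
\[
\Theta: \rho \longmapsto \tilde{\rho}, \qquad \Theta': \sigma \longmapsto \sigma \circ \nu,
\]
between the set of gyrogroup representations $G \to GL(V)$ and the set of group representations $M(G) \to GL(V)$.

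Finally, I would verify that $\Theta$ and $\Theta'$ are mutual inverses. One direction is immediate from the construction: $\Theta'(\Theta(\rho)) = \tilde{\rho}\circ \nu = \rho$. For the other direction, given $\sigma$, both $\sigma$ and $\widetilde{\sigma \circ \nu}$ are group homomorphisms $M(G) \to GL(V)$ whose pre-composition with $\nu$ equals $\sigma \circ \nu$; the uniqueness clause in the universal property forces $\widetilde{\sigma \circ \nu} = \sigma$, i.e. $\Theta(\Theta'(\sigma)) = \sigma$. No step here poses a genuine obstacle, as the entire argument is a direct application of the universal property of $M(G)$ to the target $GL(V)$; the only subtlety worth flagging is the identification of gyrogroup representations with gyrogroup homomorphisms into $GL(V)$, which is the definition adopted in the paper.
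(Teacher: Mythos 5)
Your proposal is correct and follows exactly the route the paper takes: the paper's proof is simply ``similar to the proof of Theorem \ref{equivalent action},'' i.e.\ apply the universal property of $M(G)$ with target $GL(V)$ in place of $\sym(X)$. Your explicit verification that the two assignments are mutually inverse (via the uniqueness clause) just spells out what the paper leaves implicit.
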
 
\begin{proof}
Similar to the proof of Theorem \ref{equivalent action}.
\end{proof}

In \cite{suksumran_repre}, the author defined the vector space $L^{\gyr}(G) = \{f: G \longrightarrow k \mid f\circ (L_a \circ \gyr[x, y] \circ L_a^{-1}) = f, \forall ~ a, x, y \in G\}$ over $k$, where $k$ is an algebraically closed field. He showed that $G$ acts linearly on $L^{\gyr}(G)$, where action $*$ is defined by $(g*f)(x) = f(g^{-1}\circ x)$ for all $g\in G$ and $f \in L^{\gyr}(G)$. Thus we have corresponding gyrogroup representation $\rho : G \longrightarrow GL(L^{\gyr}(G))$ defined by $\rho(g)(f) = g*f$.
\begin{theorem}
There is an vector space isomorphism between  $L^{\gyr}(G)$ and $k(M(G))$, where $k(M(G))$ is the group algebra of $M(G)$ over $k$. Moreover, if $G$ is a finite gyrogroup, then $\text{dim}(L^{\gyr}(G)) = |M(G)|$.
\end{theorem}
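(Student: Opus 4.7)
The plan is to exhibit a vector-space isomorphism between $L^{\gyr}(G)$ and the space $\mathrm{Map}(M(G), k)$ of $k$-valued functions on $M(G)$ via pullback along the quotient $\nu : G \longrightarrow M(G)$, and then identify $\mathrm{Map}(M(G), k)$ with $k(M(G))$ in the finite-dimensional case. Define $\Phi : \mathrm{Map}(M(G), k) \longrightarrow L^{\gyr}(G)$ by $\Phi(\tilde f) = \tilde f \circ \nu$. Since $\nu$ is a gyrogroup homomorphism into a group, gyrations collapse: $\nu(\gyr[x, y](w)) = \gyr[\nu(x), \nu(y)](\nu(w)) = \nu(w)$. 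Combined with $\nu(L_a(w)) = \nu(a)\nu(w)$, this gives $\nu \circ L_a \circ \gyr[x, y] \circ L_a^{-1} = \nu$ pointwise, so $\Phi(\tilde f) \in L^{\gyr}(G)$; injectivity follows from surjectivity of $\nu$. For the last clause, when $G$ is finite so is $M(G)$, and $\mathrm{Map}(M(G), k) \cong k(M(G))$ canonically as vector spaces via $\tilde f \mapsto \sum_{g \in M(G)} \tilde f(g)\, g$, both of dimension $|M(G)|$.

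The heart of the proof is surjectivity of $\Phi$: every $f \in L^{\gyr}(G)$ is constant on fibers of $\nu$. Let $H \le \sym(G)$ be the group generated by $\{L_c \circ \gyr[x, y] \circ L_c^{-1}\}$; the defining condition of $L^{\gyr}(G)$ is precisely $H$-invariance, so it suffices to show $H$-orbits on $G$ coincide with $\nu$-fibers. I proceed in three stages. First, from $L_a L_c = L_{a \circ c} \gyr[a, c]$ and closure of $R(G)$ under conjugation, one checks $L_a H L_a^{-1} = H$ for every $a \in G$; consequently $O_a = L_a(O_e)$, where $O_x$ denotes the $H$-orbit of $x$ and $O_e$ that of $e$. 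Second, this normality implies $O_e$ is closed under $\circ$: if $a, b \in O_e$ with $b = \psi(e)$, $\psi \in H$, then $a \circ b = L_a \psi L_a^{-1}(a) \in L_a H L_a^{-1}(a) = H(a) = O_a = O_e$. Third, using Proposition 2.1(4) together with (5), $\sigma_d$ acts as the identity on $R(G)$, so a single conjugate $(c, h)(e, k)(c, h)^{-1}$ in $GR(G)$ has first coordinate $L_c(hkh^{-1})L_c^{-1}(e) \in O_e$. An induction on the number of factors in a product of such conjugates, using closure of $O_e$ under $\circ$ (stage two) and $R(G) \subseteq H$ (so that $R(G)$ preserves $O_e$), shows that the first coordinate of any element of $[R(G)]$ lies in $O_e$. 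Given $\nu(a) = \nu(b)$, the identity $(a^{-1}, I)(b, I) = (a^{-1} \circ b, \gyr[a^{-1}, b]) \in [R(G)]$ yields $a^{-1} \circ b \in O_e$, and then $b = L_a(a^{-1} \circ b) \in L_a(O_e) = O_a$, so $f(a) = f(b)$.

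The main obstacle is the third stage: because the multiplication on $GR(G) = G \times R(G)$ is gyrogroup-twisted, the first coordinate of a product $(u, r)(v, s) = (u \circ r(v), \ldots)$ depends nontrivially on the second coordinate of the first factor. What makes the induction tractable is the collapse $\sigma_d|_{R(G)} = \mathrm{id}$ (from Proposition 2.1(4) and (5)) together with the preceding closure of $O_e$ under $\circ$; without these, tracking first coordinates through repeated twisted multiplications would be unmanageable.
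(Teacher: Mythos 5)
Your proposal is correct, and at the top level it is the same proof as the paper's: both sides define the map by pullback along $\nu$ (namely $F \mapsto F\circ\nu$), get injectivity from surjectivity of $\nu$, and reduce surjectivity to the claim that every $f\in L^{\gyr}(G)$ is constant on the fibres of $\nu$. The real difference is how that claim is handled. The paper only verifies invariance of $f$ under $L_a\circ\gyr[x,y]\circ\gyr[x',y']\circ L_a^{-1}$, says ``by the similar way'' one gets $f\circ(L_a\circ K\circ L_a^{-1})=f$ for all $K\in[R(G)]$, and then declares the induced $F$ on $M(G)$ ``clearly'' well defined; it never explains how membership of $(a^{-1}\circ b,\gyr[a^{-1},b])$ in the normal closure $[R(G)]$ inside the twisted product $GR(G)$ translates into $f(a)=f(b)$. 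Your three-stage argument supplies exactly this missing bridge: normality of $H=\langle L_c\gyr[x,y]L_c^{-1}\rangle$ under conjugation by left translations (giving $O_a=L_a(O_e)$), closure of $O_e$ under $\circ$, and the collapse $\sigma_d|_{R(G)}=\mathrm{id}$ (Proposition 2.1(4) and (5)), which together let you track first coordinates through products of conjugates and conclude that every element of $[R(G)]$ has first coordinate in $O_e$; then $\nu(a)=\nu(b)$ forces $a^{-1}\circ b\in O_e$, hence $b\in O_a$ and $f(a)=f(b)$. I checked the individual computations ($(c,h)(e,k)(c,h)^{-1}$ has first coordinate $c\circ hkh^{-1}(c^{-1})$, and $(a,I)^{-1}(b,I)=(a^{-1}\circ b,\gyr[a^{-1},b])$) against the group law on $G\times\sym(G^*)$, and they are right; note also that for surjectivity you only need the inclusion of $\nu$-fibres in $H$-orbits, which is precisely what your stages prove (the reverse inclusion follows from $\nu\circ L_a\circ\gyr[x,y]\circ L_a^{-1}=\nu$, which you already established). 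One small caveat: your identification of $\mathrm{Map}(M(G),k)$ with the group algebra $k(M(G))$ genuinely requires $M(G)$ to be finite (for infinite $M(G)$ the two spaces are not isomorphic), so strictly speaking you prove the stated isomorphism only in the finite case; the paper's proof carries the same implicit restriction, since the function $F$ it constructs need not have finite support.
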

\begin{proof}
Define the map $\phi: k(M(G))\longrightarrow L^{\gyr}(G)$ as $\phi (F) = F\circ\nu$, where $F \in k(M(G))$ and $\nu : G \longrightarrow M(G)$ is the natural quotient gyrogroup homomorphism. It is easy to verify that $\phi$ is an injective linear map.

Now, we will show that $\phi$ is surjective. Let $f \in L^{\gyr}(G)$. Then, for  all $ a, x, y, x', y' \in G$
\begin{align*}
f\circ (L_a \circ \gyr[x, y] \circ L_a^{-1}) &= f\\
\Rightarrow f\circ (L_a \circ \gyr[x, y] \circ\gyr[x', y'] \circ L_a^{-1}) &= f\circ (L_a \circ \gyr[x, y] \circ L_a^{-1} \circ L_a \circ\gyr[x', y'] \circ L_a^{-1})\\
&= (f\circ (L_a \circ \gyr[x, y] \circ L_a^{-1}) \circ (L_a \circ\gyr[x', y'] \circ L_a^{-1}))
\\
&= f \circ (L_a \circ\gyr[x', y'] \circ L_a^{-1}))
\\&= f
\end{align*}
By the similar way, we can show that $f \circ(L_a \circ K \circ L_a^{-1})) = f $, for all $K \in [R(G)]$. In other words, $(L_a \circ \gyr[x, y] \circ L_a^{-1})(z) \in \ker(\nu)$, for all $ a, x, y, z \in G$

Define $F: M(G) \longrightarrow \mathbb{C}$ as $F(g[R(G)]) = f(g)$. By the above argument, clearly, $F$ is well defined. Hence, $\phi (F) = F\circ\nu = f$. Hence, $\phi$ is surjective.

Further, if $G$ is a finite gyrogroup, then $M(G)$ is a finite group and $\text{dim}(k(M(G))) = |M(G)| = \text{dim}(L^{\gyr}(G))$.
\end{proof}

\begin{remark}
We can easily observe the following:
\begin{enumerate}
\item If $\text{Ch}(k) \nmid |G|$, then $\text{Ch}(k) \nmid |M(G)|$.
\item Let $\rho : G \longrightarrow GL(V)$ be a gyrogroup representation of $G$. Then $\rho = \rho'\circ \nu$, for a unique group representation $\rho' : M(G) \longrightarrow GL(V)$ of $M(G)$.
\item $W$ is a sub representation of $G$ if only if $W$ is a sub representation of $M(G)$. Also, every irreducible gyrogroup representation of $G$ on $V$ gives an irreducible group representation of $M(G)$ on $V$ and vice versa.

\item Suppose $\text{Ch}(k) \nmid |G|$ or $\text{Ch}(k) = 0$. Let $\rho : G \longrightarrow GL(V)$ be a gyrogroup representation of $G$. Then $\rho = \rho'\circ \nu$, for a unique group representation $\rho' : M(G) \longrightarrow GL(V)$ of $M(G)$. Now, by Maschke's theorem for group, we have $\rho'= \rho_1'\oplus \cdots \oplus \rho_n'$. Therefore, $\rho= \rho_1\oplus \cdots \oplus \rho_n$, where $\rho_i = \rho_i'\circ \nu$, for $1\leq i \leq n$. This is the Maschke's theorem for gyrogroup.

\item Consider the action $*$ of $M(G)$ on $k(M(G))$ defined as $((g[R(G)])*F)(x[R(G)]) = F((g^{-1} \circ x)[R(G)])$. Thus $(\nu(g) * F)(\nu(x)) = (F\circ \nu)(g^{-1} \circ x)$.

Now, let $f\in L^{\gyr}(G)$. Then $f = F\circ \nu$ for some $F \in k(M(G))$. Thus, from the motivation of $*$ of $M(G)$ on $k(M(G))$, we have action $*$ of $G$ on $L^{\gyr}(G)$ defined by $(g*f)(x) = f(g^{-1} \circ x)$ which is the action introduced in \cite{suksumran_repre}. 
\item The restriction of representation (action) of $GR(G)$ induced by the trivial representation (action) of $[R(G)]$ on $G$ is the same as representation  (action) defined in  \cite{suksumran_gyroaction} {(\cite{suksumran_repre})}. Also, from every representation (action) of $[R(G)]$, we have a  representation (action) of $G$.
\item Let $H$ be a L-subgyrogroup of $G$. Then $\nu(H)$ be the subgroup of $M(G)$. By using the concept of induced representation of $M(G)$ by $\nu(H)$, we have an analogous  concept of induced representation of $G$ by $H$. In the similar manner of group representation, we can deduce the Frobenius reciprocity and Mackey’s Irreducibility Criterion for gyrogroup $G$. 
\end{enumerate}

\end{remark}

\section{Right Gyrogroup Action}
In the previous section, we have seen that a gyrogroup action is nothing but a group action by the GGC of the gyrogroup. In this section, we deal with right gyrogroups  and study right gyrogroup action and its properties. The right gyrogroups  were first studied by Tuval Foguel and Ungar \cite{foguel_involutory} as left gyrogroups. For details, we refer to \cite{foguel_involutory,rjl_topological rgyrogroup}.
\begin{definition}\cite{rjl_topological rgyrogroup}
	A groupoid $(G,\circ)$ is called a right gyrogroup if 
	\begin{enumerate}
		\item there is an element $e$ in $G$ such that $a\circ e=a,$ for all $a\in G$ (right identity);
		\item for every $a \in G$, there exists an element $a'\in G$ such that $a\circ a'=e$ (right inverse);
		\item for all $y,z \in G$, there exists an automorphism $\gyr[y,z]:G\longrightarrow G$ such that $(x\circ y)\circ z=\gyr[y,z](x)\circ (y\circ z),$ for all $x\in G$;
		\item for all $y\in G$, $\gyr[y,y']=I_G,$ where $y'$ is the right inverse of $y.$
	\end{enumerate}
\end{definition}
Now, we give some properties of a right gyrogroup discussed in \cite{rjl_topological rgyrogroup}.
\begin{proposition}
	Let $(G,\circ)$ be a right gyrogroup. Then
\begin{enumerate}
	\item $a'$ is also the left inverse of $a,$ for all $a\in G.$
	\item $e$ is also the left identity of $G.$
	\item $\gyr[e,x]=I_G=\gyr[x,e),$ for all $x\in G.$
	\item $\gyr[x,y]\gyr[x\circ y,z]=\gyr[y,z]\gyr[\gyr[y,z](x),y\circ z]$ for all $x,y,z\in G.$
    \item $\gyr[x,x']=I_G,$ for all $x\in G.$
\end{enumerate}
\end{proposition}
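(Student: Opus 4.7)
The plan is to first distill a right cancellation law from the four axioms and then apply it uniformly to settle each statement. By right gyroassociativity combined with axiom~(4),
\[
(x \circ y) \circ y' = \gyr[y, y'](x) \circ (y \circ y') = x \circ e = x,
\]
so the equation $u \circ y = v \circ y$ implies $u = v$ after composing on the right with $y'$. This right cancellation drives all the remaining steps.

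For~(1), I would set $c = a' \circ a$ and expand $c \circ a' = (a' \circ a) \circ a'$ by right gyroassociativity; axiom~(4) collapses this to $a' \circ e = a'$. Composing once more on the right by a right inverse $a''$ of $a'$ and invoking the identity $(x \circ y) \circ y' = x$ forces $c = a' \circ a'' = e$. Part~(2) then follows by computing $(e \circ a) \circ a' = \gyr[a, a'](e) \circ (a \circ a') = e = a \circ a'$ and right-cancelling $a'$. For~(3), right gyroassociativity with $y = e$ combined with~(2) gives $a \circ c = \gyr[e, c](a) \circ c$; right cancellation yields $\gyr[e, c] = I_G$, and the symmetric argument with $z = e$ gives $\gyr[c, e] = I_G$.

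For~(4), I would expand the four-fold product $((w \circ x) \circ y) \circ z$ in two different ways using only right gyroassociativity. In one expansion I group $(w \circ x) \circ y$ first and then associate over $z$; in the other I associate over $y \circ z$ first, use that $\gyr[y, z]$ is an automorphism to distribute over $w \circ x$, and then associate once more. Both resulting expressions share the same right factor $\gyr[y, z](x) \circ (y \circ z) = (x \circ y) \circ z$, so right cancellation reduces equality of the two four-fold expansions to equality of their left factors, which is exactly the claimed relation between the four gyrations (read in the paper's convention for juxtaposition of automorphisms).

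Finally~(5) follows by specializing the identity of~(4) to $y = x'$ and $z = x$: the left-hand side becomes $\gyr[x, x']\gyr[x \circ x', x] = I_G \cdot \gyr[e, x] = I_G$ by axiom~(4) and~(3), while the right-hand side is $\gyr[x', x]\gyr[\gyr[x', x](x), x' \circ x] = \gyr[x', x] \cdot I_G = \gyr[x', x]$ by~(1) and~(3); comparing gives $\gyr[x', x] = I_G$, which I read as the intended genuinely new dual of axiom~(4). The main obstacle is bootstrapping the right cancellation law at the very start without circularity; once this is in hand, each of the five parts reduces to a controlled manipulation of right gyroassociativity together with axiom~(4).
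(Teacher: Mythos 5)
The paper itself gives no proof of this proposition: it is quoted as a list of known properties from \cite{rjl_topological rgyrogroup}, so there is no internal argument to compare against, and your proposal must stand on its own --- which it does. The organizing device, right cancellation obtained from $(x\circ y)\circ y'=\gyr[y,y'](x)\circ(y\circ y')=x\circ e=x$, is exactly the natural bootstrap, and your uses of it in (1)--(3) are sound; in particular, in (1) you correctly avoid the not-yet-available fact $e\circ a'=a'$ by composing on the right with a right inverse $a''$ of $a'$ rather than trying to cancel directly. For (4), expanding $((w\circ x)\circ y)\circ z$ in the two ways you describe and cancelling the common right factor $(x\circ y)\circ z=\gyr[y,z](x)\circ(y\circ z)$ gives $\gyr[x\circ y,z]\bigl(\gyr[x,y](w)\bigr)=\gyr[\gyr[y,z](x),y\circ z]\bigl(\gyr[y,z](w)\bigr)$ for all $w$, and this coincides with the displayed identity precisely when juxtaposition of gyrations is read as left-to-right composition (maps acting on the right), which is the convention of \cite{rjl_topological rgyrogroup}; you are right to flag this, since under the usual left-composition reading the printed order of the factors would have to be reversed. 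Your remark on (5) is also apt: as literally stated it merely restates axiom (4) of the definition, and the genuinely new dual statement $\gyr[x',x]=I_G$ is exactly what your specialization $y=x'$, $z=x$ of (4), combined with (1) and (3), delivers. So the proposal correctly proves (and slightly strengthens) the proposition, supplying the derivation the paper leaves to the cited reference.
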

\textbf{Note-} Now onwards, we denote $a'$ by $a^{-1}.$
\begin{proposition}\label{group based action}
	Let $(G,\cdot)$ be a group. Define a binary operation $\circ$ on $G$ by $a\circ b=b^{-1}ab^2$.
	Then,  $(G,\circ)$ is a right gyrogroup with gyro automorphism $\gyr[a,b]:G\longrightarrow G$ defined by $\gyr[a,b](c)= [b^{-1},a]c[a, b^{-1}],$ where $[a,b]$ denotes the commutator of $a,b$ in $G.$ We call $(G,\circ)$ the $G$-based right gyrogroup. 
\end{proposition}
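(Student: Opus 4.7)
The plan is to verify the four axioms of a right gyrogroup directly from the definition $a \c b = b^{-1} a b^{2}$, with all auxiliary computations taking place inside the group $(G, \cdot)$.

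The first two axioms are immediate. For (1), $a \c e = e^{-1} a e^{2} = a$, so $e$ is a right identity. For (2), I would test the group inverse $a' := a^{-1}$ and find $a \c a^{-1} = (a^{-1})^{-1} a (a^{-1})^{2} = a \cdot a \cdot a^{-2} = e$, so $a^{-1}$ serves as the right inverse with respect to $\c$. The substantive content is therefore in (3) and (4).

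For (3), my first move is to observe that under the commutator convention $[u,v] = u v u^{-1} v^{-1}$, the element $[a, b^{-1}]$ is the group-inverse of $[b^{-1}, a]$, so the proposed map $\gyr[a,b](c) = [b^{-1}, a]\, c\, [a, b^{-1}]$ is conjugation in $(G, \cdot)$ by $g_{a,b} := [b^{-1}, a]$. Hence it is automatically a group automorphism of $(G, \cdot)$; and since $\c$ is defined purely in terms of $\cdot$, any group automorphism of $(G, \cdot)$ is simultaneously an automorphism of $(G, \c)$, so $\gyr[a,b]$ lies in the required automorphism group. The remaining task is the right gyroassociative identity $(x \c y) \c z = \gyr[y,z](x) \c (y \c z)$. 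I would expand both sides: the left side unwinds to $z^{-1} y^{-1} x y^{2} z^{2}$, while the right side, after substituting $y \c z = z^{-1} y z^{2}$ into the outer operation, becomes $(z^{-1} y z^{2})^{-1}\, \gyr[y,z](x)\, (z^{-1} y z^{2})^{2}$. The commutator factors $[z^{-1}, y]$ on the left of $x$ and $[y, z^{-1}]$ on the right are precisely calibrated so that the surrounding $y, z$-words telescope, again returning $z^{-1} y^{-1} x y^{2} z^{2}$. This telescoping is the main (and only non-trivial) obstacle, but it is a mechanical calculation once the commutator convention is fixed.

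Axiom (4) then follows in one line: $\gyr[y, y^{-1}](c) = [(y^{-1})^{-1}, y]\, c\, [y, (y^{-1})^{-1}] = [y, y]\, c\, [y, y] = c$, since $[y, y] = e$, so $\gyr[y, y^{-1}] = I_{G}$. This closes the verification and establishes that $(G, \c)$ is a right gyrogroup with the stated gyroautomorphisms.
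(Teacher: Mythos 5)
Your proof is correct. Axioms (1) and (2) are immediate as you say; your observation that $\gyr[a,b]$ is conjugation by $[b^{-1},a]$ in $(G,\cdot)$, hence an automorphism of $(G,\cdot)$ and therefore of $(G,\circ)$, is valid; and the telescoping you describe does go through: with $[u,v]=uvu^{-1}v^{-1}$ (which is indeed the convention required for the stated formula), one gets $(z^{-1}yz^{2})^{-1}[z^{-1},y]\,x\,[y,z^{-1}](z^{-1}yz^{2})^{2}=z^{-1}y^{-1}xy^{2}z^{2}=(x\circ y)\circ z$, and axiom (4) follows from $[y,y]=e$. The paper states this proposition without proof (deferring to the literature on right gyrogroups), so there is no in-paper argument to compare with; your direct verification is the natural way to establish it.
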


Now, we give a more realistic action of a right gyrogroup. Further, we show that every right gyrogroup action gives a right gyrogroup homomorphism and vice-versa.
\begin{definition}
	Let $(G, \circ)$ be a right gyrogroup  and $X$ be a non empty set. An action of $G$ on $X$ is a map $*:G\times X\longrightarrow X$ such that
	\begin{enumerate}
		\item $1*x=x$
		\item $(b\circ a)*x=a^{-1}*(b*(a*(a*x))),$ for all $a,b\in G$ and $x\in X.$
	\end{enumerate}
\end{definition}

\begin{definition}
	Let $(G,\circ)$ be a right gyrogroup and $(G',\odot)$ be a group. A map $\phi:G\to G'$ is called a right gyrogroup homomorphism if $$\phi(a\circ b)=(\phi(b))^{-1}\odot \phi(a)\odot (\phi(b))^2.$$
\end{definition}
\begin{theorem}
 Let $(G, \circ)$ be a right gyrogroup and $*$ be an action of $G$ on a set $X$. For each $a\in G$, define a map $f_a:X\longrightarrow X,$ by $f_a(x)=a*x$. Then\\ 
(1) $f_a\in \sym(X)$.\\
(2) Define $\phi:G\to \sym(X)$  by $\phi(a)=f_a$. Then the map $\phi$ is a right gyrogroup homomorphism from $G$ to $\sym(X)$ with $\ker \phi=\{a\in G:a·*x=x\,\, \forall \,x\in X\}.$\\
(3) Let $\hat{\phi} :G \to \sym(X)$ be a right gyrogroup homomorphism. The map $\hat{*}$ defined by $a \hat{*} x = \phi(a)(x), \,a\in  G,\, x \in X$
is a right gyrogroup action of $G$ on $X$.
\end{theorem}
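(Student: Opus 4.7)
The plan is to derive all three statements from the single action axiom $(b\c a)*x = a^{-1}*(b*(a*(a*x)))$ by judicious substitutions. Writing $L_a := f_a$ so that $L_a(x) = a*x$, I would first extract three key identities. Setting $b = a^{-1}$ and invoking $a^{-1}\c a = e$ (Proposition 5.2(1)) gives $x = L_{a^{-1}}^{2}L_a^{2}(x)$ for every $x \in X$; the symmetric substitution (replacing $a$ by $a^{-1}$, taking $b = a$, and using $a\c a^{-1} = e$) gives $x = L_a^{2}L_{a^{-1}}^{2}(x)$; finally, setting $b = e$ and using $e\c a = a$ together with $e*y = y$ gives $L_a = L_{a^{-1}}L_a^{2}$.

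The first two identities show that $L_a^{2}$ and $L_{a^{-1}}^{2}$ are two-sided inverses in the monoid of self-maps of $X$, hence $L_a^{2}$ is a bijection. A short elementary argument, namely that if $g^{2}$ is a bijection then so is $g$ (injectivity of $g^{2}$ forces injectivity of $g$, and surjectivity of $g^{2}$ applied to $y$ produces $z$ with $g(g(z)) = y$), then yields part (1). Post-composing the third identity on the right with $L_a^{-2}$ gives $L_{a^{-1}} = L_a^{-1}$, i.e.\ $f_{a^{-1}} = f_a^{-1}$; substituting this into the action axiom rewrites it as $f_{b\c a} = f_a^{-1}\c f_b\c f_a^{2}$, which is precisely the right gyrogroup homomorphism condition $\phi(b\c a) = \phi(a)^{-1}\phi(b)\phi(a)^{2}$. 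This establishes the homomorphism claim of part (2), and the kernel description is immediate, since $a \in \ker\phi$ iff $f_a = \id$ iff $a*x = x$ for every $x \in X$.

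For part (3), given a right gyrogroup homomorphism $\hat\phi : G \to \sym(X)$, I would first extract two auxiliary relations from the homomorphism property. Applying it to $e\c e = e$ yields $\hat\phi(e) = \hat\phi(e)^{2}$, hence $\hat\phi(e) = \id$, so axiom (1) for $\hat *$ is immediate. Applying the homomorphism property to $a^{-1}\c a = e$ and simplifying yields $\hat\phi(a^{-1}) = \hat\phi(a)^{-1}$. With these two facts in hand, axiom (2) for $\hat *$ is a direct unwinding:
\[
(b\c a)\,\hat *\, x = \hat\phi(b\c a)(x) = \hat\phi(a)^{-1}\hat\phi(b)\hat\phi(a)^{2}(x) = a^{-1}\,\hat *\,(b\,\hat *\,(a\,\hat *\,(a\,\hat *\,x))).
\]
The principal obstacle in the whole argument is the bijectivity step in part (1): it is not possible to read off invertibility of $L_a$ from the axiom directly, so one is forced to detour through $L_a^{2}$ and then upgrade. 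Once that is settled, everything else is bookkeeping between the action axiom and the defining identity of a right gyrogroup homomorphism.
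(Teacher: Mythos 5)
Your proof is correct, and its skeleton matches the paper's: establish bijectivity of $f_a$ from inverse substitutions in the action axiom, show $f_{a^{-1}}=f_a^{-1}$, observe that the action axiom is then literally the defining identity of a right gyrogroup homomorphism, and for (3) unwind the definition. Where you differ, your version is actually tighter at two spots. For $f_{a^{-1}}=f_a^{-1}$ the paper passes from $f_{a^{-1}}^2\circ f_a^2=\id$ to $f_{a^{-1}}=f_a^{-1}$ ``since both are bijections,'' which as stated does not follow (square roots in $\sym(X)$ are not unique); your $b=e$ substitution, giving $f_a=f_{a^{-1}}\circ f_a^2$ and hence $f_{a^{-1}}=f_a^{-1}$ after cancelling, is the clean way to obtain it. Similarly, in (3) the paper asserts $\hat{\phi}(e)=\id$ and silently uses $\hat{\phi}(b^{-1})=\hat{\phi}(b)^{-1}$, whereas you derive both from $e\circ e=e$ and $a^{-1}\circ a=e$, which is exactly the needed bookkeeping. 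Your detour through $L_a^2$ for bijectivity is equivalent to the paper's direct computation: the explicit preimage $a*(a^{-1}*(a^{-1}*x))$ that the paper exhibits is precisely what your ``upgrade from $g^2$ to $g$'' argument produces. One small point common to both arguments: your symmetric substitution (and the paper's surjectivity step) uses $(a^{-1})^{-1}=a$; this is standard for right gyrogroups (from $(x\circ y)\circ y^{-1}=\gyr[y,y^{-1}](x)\circ e=x$ the right translation by $a^{-1}$ is injective, and both $a$ and $(a^{-1})^{-1}$ are left inverses of $a^{-1}$ by Proposition 5.2(1)), but it merits a one-line remark in a fully written-out proof.
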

\begin{proof} Suppose $f_a(x)=f_a(y)$ for some $x,y\in X$. Then \begin{align*}
		 f_a(x)=f_a(y)
	&	\Rightarrow a*x=a*y\\
	&	\Rightarrow a^{-1}*(a^{-1}*(a*(a*x)))
		=a^{-1}*(a^{-1}*(a*(a*y)))\\
	&	\Rightarrow (a\circ a^{-1})*x=(a\circ a^{-1})*y\\
	&	\Rightarrow 1*x=1*y\Rightarrow x=y
	\end{align*}
Hence, $f_a$ is one one. Now let $x\in X.$ Then 
$f_a(a*(a^{-1}*(a^{-1}*x)))=a*(a*(a^{-1}*(a^{-1}*x)))$
$=(a^{-1}\circ a)*x=x.$
Therefore, $f_a$ is onto as well so that $f_a\in\sym(X).$

For $a\in G,$ $(a^{-1}\circ a)*x=a^{-1}*(a^{-1}*(a*(a*x)))$
$\Rightarrow x=((f_{a^{-1}})^2\circ (f_a)^2)(x)$
$\Rightarrow f_{a^{-1}}=f_a^{-1},$ (since $f_{a^{-1}}$ and $f^{-1}_a$ are bijections).
Let $a,b\in G$, then $\phi(a\circ b)=f_{a\circ b}$ and $f_{a\circ b}(x)=(f_{a^{-1}}\circ f_b\circ f_{a}^2)(x)$ (here $\circ$ is composition of maps). 
Thus, $f_{a\circ b}(x)=((f_a)^{-1}\circ f_b\circ f_a^2)(x)$
$\Rightarrow \phi(a\circ b)=(\phi(a))^{-1}\phi(b)(\phi(a))^2,$ i.e., $\phi$ is a right gyrogroup homomorphism. Then
 \begin{align*}
 \ker 	\phi&=\{a\in G\mid \phi(a)=Id_X\}\\
 &=\{a\in G\mid f_a=Id_X\}\\
 &=\{a\in G\mid a*x=x\,\, \forall x\in X\}.
 \end{align*}
To prove the third statement, note that $e\hat{*}x =\hat{\phi}(e)(x) =Id_X(x) =x$ for all $x \in X$. For  $a, b \in G$ and  $x \in X$, $b^{-1} \hat{*}(a \hat{*}(b\hat{*}(b\hat{*}x))) =(\hat{\phi}(b^{-1})\circ \hat{\phi}(a) \circ \hat{\phi} (b)\circ \hat{\phi}(b))(x) =\hat{\phi}(a \circ b)(x) =(a \circ b) \hat{*}x. $ 
\end{proof}	


\begin{proposition}
Let $G$ be group and $G,\circ$ be $G$-based right gyrogroup.	Then every group action of $G$ on a set $X$ is a right gyrogroup action of $(G,\circ)$ on $X$. 
\end{proposition}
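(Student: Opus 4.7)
The plan is to take an arbitrary group action $\cdot : G \times X \to X$ and directly verify the two axioms of a right gyrogroup action with respect to the binary operation $a\circ b = b^{-1}ab^2$. The approach is a straightforward unpacking of definitions; no nontrivial computation is hidden inside.

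First, I would note that for the $G$-based right gyrogroup, the group identity $e$ is also the right identity (since $a \circ e = e^{-1}ae^2 = a$), and the right inverse of $a$ in $(G,\circ)$ coincides with the group inverse $a^{-1}$. So writing $*$ for the given group action, the identity axiom $e * x = x$ is immediate from the group action axiom.

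Next, for the compatibility axiom $(b\circ a)*x = a^{-1}*(b*(a*(a*x)))$ required of a right gyrogroup action, I would substitute the defining formula $b\circ a = a^{-1}ba^2$ and use associativity of the group action:
\begin{align*}
(b\circ a)*x &= (a^{-1}ba^2)*x \\
&= a^{-1}*(b*(a*(a*x))).
\end{align*}
This is exactly the required identity.

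The main obstacle will be essentially nothing, since by construction the operation $\circ$ was defined so as to absorb the conjugation/rebracketing needed to produce the right gyrogroup axioms from a group. The only point worth stating carefully is that the inverse $a'$ in the right gyrogroup $(G,\circ)$ agrees with the group inverse $a^{-1}$, which is either verified directly from $a \circ a^{-1} = (a^{-1})^{-1}a(a^{-1})^2 = e$ or cited from Proposition 5.2(1) in the excerpt.
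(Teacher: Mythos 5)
Your proposal is correct and follows the same route as the paper: substitute $b\circ a=a^{-1}ba^{2}$ and use associativity of the group action to get $(b\circ a)*x=a^{-1}*(b*(a*(a*x)))$, with the identity axiom immediate. The extra remark that the identity and inverses of $(G,\circ)$ coincide with those of $G$ is a harmless (and welcome) clarification that the paper leaves implicit.
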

\begin{proof}
	Let $*$ be an action of $G$ on $X$. Then $e*x=x$ and $(a\circ b)*x=(b^{-1}ab^2)*x=b^{-1}*(a*(b*(b*x))$. This shows that $*$ is a right gyrogroup action of $(G,\circ)$ on the set $X$.  
\end{proof}

The following example shows that a right gyrogroup action of $(G,\circ)$ need not be a group action of $G$.
\begin{example}
Consider the non-abelian group $G=\langle a,b|a^3=b^3=e=[a,b]^3=[a,[a,b]]=[b,[a,b]]\rangle$ of order $27$.  Then the $G$-based right gyrogroup $(G,\circ)$ is an abelian group.

Let $*$ be  the left action of the group $(G,\circ)$ on $G$, that is,  $g*x=g\circ x$. Then, it is easy to see that 
\begin{align*}
(g_1 \circ g_2)* x &= (g_1 \circ g_2)\circ x \\&= (g_1 \circ g_2)\circ x 
\\&= g_2^{-1} \circ(g_1 \circ (g_2 \circ (g_2\circ x))) ~(\text{by using commutative property of }\circ)
\\&= g_2^{-1} *(g_1 *(g_2 * (g_2* x)))
\end{align*}
This shows that $*$ is a right gyrogroup action of $(G,\circ)$. We can also conclude it by the following argument:

Since $(G,\circ)$ is a group and $*$ be an action of $(G,\circ)$, by Proposition  \ref{group based action}, $*$ is also a right gyrogroup action of $(G,\circ_1)$, where $(G,\circ_1)$ is $(G,\circ)$-based right gyrogroup. Since $(G,\circ)$ is an abelian group, $g_1 \circ_1 g_2 = g_2^{-1}\circ g_1 \circ g_2^{2} = g_1 \circ g_2$. Hence, $*$ is a right gyrogroup action of $(G,\circ)$.

Now finally we show that $*$ is not a group action of $G$. It is enough to show that
$$(ab^{-1})*a\neq a*(b^{-1}*a).$$
Suppose, 
$(ab^{-1})*a=a*(b^{-1}*a)$

$\Rightarrow a^{-1}(ab^{-1})a^2=a*(a^{-1}b^{-1}a^2)$

$\Rightarrow b^{-1}a^2=(a^{-2}ba)(a)(a^{-1}b^{-1}a^2)(a^{-1}b^{-1}a^2)$

$\Rightarrow b^{-1}a^2=abab^{-1}ab^{-1}a^2$

$\Rightarrow e=a[b,a]a^2$

$\Rightarrow ab=ba,$ which is a contradiction. Therefore, $*$ is not an action of $G$.
\end{example}
Next, we discuss an example of a right gyrogroup for which every right gyrogroup action is trivial.
\begin{example}\cite[Corollary 5.13]{rjl_topological rgyrogroup}
	Let $G$ be a group and $H$ be a subgroup of $G$. Let $S$ be a right transversal of $H$ in $G$. Then $(S, \circ)$ is  a groupoid where $\circ$ is defined by $$x\circ y=Hxy\cap S.$$
	 Now, consider $G=Sym(n),$ $H=Sym(n-1)$ and $S=\{I,(1\ n),(2 \ n), \cdots (n-1 \ n)\}.$ For $1\leq i\ne j\leq n-1$, we have
	 
	 $H(i \ n)(j \ n)=H(i \ n \ j)=H(i \ j)(i \ n)=H(i \ n).$
	 
	 Hence, $(i \ n )\circ (j n)=(i \ n),$ for $i\neq j$.
 
Then $(S, \circ)$ is a  right gyrogroup. In this case, $\gyr[a, b]: S \longrightarrow S$ is given by 

$\gyr[a, b](c)= \begin{cases}
c, ~\text{if}~ c= b\\
b, ~\text{if}~ a= c\\
c, ~\text{if}~ a= b\\
c, ~\text{if}~ a\ne b \ne c
\end{cases}$
	 
	 Let $*$ be an action of $S$ on a non empty set $X$. Then we have a homomorphism $\phi:S\longrightarrow Sym(X)$ by
	 
	 $\phi(x\circ y)=\phi(y)^{-1}\phi(x)\phi(y)^2$
	 
	 $\Rightarrow \phi((i \ n) \circ (j \ n))=\phi((j \ n))^{-1} \phi(i \ n) \phi((j \ n ))^2$
	 
	 $\Rightarrow \phi((i \ n))=\phi((j \ n))\phi((i \ n))\phi((j \ n))^2$
	 
	 $\Rightarrow \phi((i \ n))=\phi((j \ n))\phi((i \ n))$
	 
	 $\Rightarrow \phi((j \ n))=I,$ for all $1\leq j\leq n-1.$
	 
	 Hence $\phi$ is a trivial homomorphism, which implies $*$ is a trivial right gyrogroup action.
\end{example}
 
 Here, we use the fact that a right gyrogroup is a particular case of right loops and adopt certain notions and terminologies from \cite{shukla}   to define an invariant right subgyrogroup. Further, we show that the isotropy subgyrogroup of a right gyrogroup action is an invariant subgyrogroup.
\begin{definition}\cite{shukla}
An equivalence relation $R$ on a right gyrogroup $G$ is called a congruence in $G$ if $R$ is a right subgyrogroup of $G\times G$.
\end{definition}

The following result is a particular case of \cite[Theorem 2.7]{shukla}.
\begin{theorem}\label{congruence}
Let $R$ be a congruence on a right gyrogroup $G$ and $R_e$ the equivalence class determined by the identity $e$. Then 
\begin{enumerate}\item $R_e$ is a right subgyrogroup of $G$. 
\item $\gyr[y,z](x)\in R_e$ and $\gyr^{-1}[y,z](x)\in R_e$ for all $x\in R_e$ and $y, z \in G$.
\item $(y\circ (x \circ z)) \circ (y \circ z)^{-1} \in R_e$ for all $x\in R_e$ and $y, z \in G$.
\item $[\gyr^{-1}[\gyr[x,y](x^{-1}), x \circ y](z)\circ \gyr[x,y](x^{-1})]\circ z^{-1} \in R_e$ for all $x\in R_e$ and $y, z \in G$.
\item $(y, z) \in R$ and $y \circ z^{-1} \in R_e$.
\item $R = \{(a, b)\in G\times G\mid a = x\circ b ~\text{for some}~ x \in R_e\}$
\end{enumerate}

Conversely, let $H \ne \{e\}$ be a sub right gyrogroup of $G$ satisfying $(2) -(4)$, then there is a unique congruence $R$ on $G$ such that $R_e = H$.
\end{theorem}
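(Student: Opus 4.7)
My plan is to exploit throughout that $R$ is a right subgyrogroup of $G\times G$ which, by reflexivity, contains the diagonal $\Delta=\{(a,a):a\in G\}$. The basic dictionary is that $x\in R_e$ if and only if $(x,e)\in R$, and identities in $R_e$ will be read off from suitable elements of $R$ via this correspondence together with diagonal elements.

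For (1), closure of $R_e$ under $\circ$ and under inverses follows at once from the corresponding closures of $R$: if $(a,e),(b,e)\in R$ then $(a,e)\circ(b,e)=(a\circ b,\,e)\in R$ and $(a^{-1},e)=(a,e)^{-1}\in R$. For (2), gyrations in $G\times G$ act componentwise, so
\[\gyr[(y,y),(z,z)](x,e)=(\gyr[y,z](x),\gyr[y,z](e))=(\gyr[y,z](x),\,e)\in R,\]
and the analogous computation with $\gyr^{-1}$ (also respected by $R$) yields the second assertion. For (5), composing $(y,z)\in R$ with the diagonal element $(z^{-1},z^{-1})$ gives $(y\circ z^{-1},e)\in R$; for the converse I compose $(y\circ z^{-1},e)\in R$ with $(z,z)$ and use $(y\circ z^{-1})\circ z=\gyr[z^{-1},z](y)\circ(z^{-1}\circ z)=y$, where $\gyr[z^{-1},z]=I_G$ comes from axiom (4) applied to $z^{-1}$ together with $(z^{-1})^{-1}=z$. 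Claim (3) is then immediate from $(y,y)\circ((x,e)\circ(z,z))=(y\circ(x\circ z),\,y\circ z)\in R$ and (5), and (6) is a direct reformulation of (5).

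The only delicate piece is (4). The key identity is
\[\gyr[x,y](x^{-1})\circ(x\circ y)=y,\]
obtained by applying right gyroassociativity to $(x^{-1}\circ x)\circ y=e\circ y=y$. Set $u=\gyr[x,y](x^{-1})$; since $x^{-1}\in R_e$, part (2) gives $u\in R_e$, hence $(u,e)\in R$. Now apply the gyration $\gyr^{-1}[(u,e),(x\circ y,y)]$ componentwise (noting that $\gyr^{-1}[e,y]=I_G$) to the diagonal element $(z,z)\in R$, producing
\[(\gyr^{-1}[u,x\circ y](z),\,z)\in R.\]
Composing with $(u,e)\in R$ yields $(\gyr^{-1}[u,x\circ y](z)\circ u,\,z)\in R$, and a final application of (5) delivers the required element of $R_e$.

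For the converse, given a right subgyrogroup $H\ne\{e\}$ (so $e\in H$) satisfying (2)--(4), I put $R=\{(a,b)\in G\times G:a\circ b^{-1}\in H\}$ and verify $R$ is an equivalence relation and a right subgyrogroup of $G\times G$, with $R_e=H$. Reflexivity is immediate; symmetry and transitivity reduce, after suitable right-gyroassociative manipulations, to closure of $H$ under $\circ$, inverses, and the gyroautomorphisms of (2). The crucial step is closure of $R$ under the binary operation of $G\times G$: given $(a_1,b_1),(a_2,b_2)\in R$, one must show $(a_1\circ a_2)\circ(b_1\circ b_2)^{-1}\in H$, and unpacking this via iterated right gyroassociativity produces exactly the two types of expressions appearing in (3) and in (4). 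Uniqueness then follows because any congruence with $R_e=H$ must satisfy (6), which pins it down set-theoretically. I expect the main obstacle to lie precisely in this closure-under-product verification: matching the terms produced by iterated right gyroassociativity with the (at first sight unmotivated) shape of hypothesis (4) is exactly what forces that hypothesis to be written in its precise gyration-conjugated form.
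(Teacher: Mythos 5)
Two preliminary remarks. First, the paper itself gives no proof of this theorem: it is quoted as a particular case of Shukla's Theorem 2.7 on congruences in right quasigroups, so there is no in-paper argument to compare yours with. Second, your forward direction (items (1)--(6)) is essentially correct and self-contained: the dictionary $x\in R_e\Leftrightarrow (x,e)\in R$, the componentwise action of gyrations on $G\times G$, and the reductions of (3), (4) and (6) to (5) all check out, as does your use of $\gyr[z^{-1},z]=I_G$. The one step you assert without justification is that $R$ is stable under the \emph{inverse} gyrations $\gyr^{-1}[u,v]$ for $u,v\in R$, which you use in (2) and, crucially, in (4). For left gyrogroups this is Proposition \ref{l-subgyro}, which rests on $\gyr^{-1}[a,b]=\gyr[b,a]$; that identity is not among the right-gyrogroup identities you may quote. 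You should instead argue that, since $R$ is itself a right gyrogroup and right cancellation holds in $G\times G$, the gyrations of $R$ coincide with the restrictions of the ambient gyrations and are bijections of $R$, whence $\gyr[u,v](R)=R$ and therefore $\gyr^{-1}[u,v](R)=R$. This is a fillable gap, but it must be filled, since it carries items (2) and (4).

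The genuine gap is the converse. Defining $R=\{(a,b)\in G\times G\mid a\circ b^{-1}\in H\}$ is indeed the right (and, by (6), the only possible) candidate, and your uniqueness remark is fine; but none of the substantive verifications is carried out: symmetry and transitivity of $R$, closure of $R$ under the componentwise operation and under inverses, and the fact that ambient gyrations with parameters in $R$ restrict to automorphisms of $R$ (this is exactly where the $\gyr^{-1}$ half of hypothesis (2) has to enter). You yourself flag the closure-under-product computation --- the place where hypotheses (3) and (4) are supposed to emerge ``after iterated right gyroassociativity'' --- as an expected obstacle rather than something you have done, and you give no indication of how $(a_1\circ a_2)\circ(b_1\circ b_2)^{-1}$ actually decomposes into the expressions of (3) and (4). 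Since that computation (together with symmetry/transitivity) is the entire content of the converse, what you have is a correct proof of the direct implications plus a plausible plan, not a proof of the theorem; to complete it you must either carry out these manipulations explicitly or, as the paper does, invoke Shukla's theorem for right quasigroups/right loops and check that a right gyrogroup satisfies its hypotheses.
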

\begin{definition}
A right subgyrogroup of a right gyrogroup $G$ will be called an invariant  right rightgyrogroup if it satisfies the conditions $(2), (3)$ and $(4)$ of Theorem \ref{congruence}.
\end{definition}

\begin{proposition}
Let $G$ be a right gyrogroup and $*$ be an action of $G$ on a non empty set $X$. Then the set $G_X=\{a \in G \mid a* x = x ~\forall x \in X \}$ is an invariant right subgyrogroup of $G$.
\end{proposition}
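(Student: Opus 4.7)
My plan is to exploit the right gyrogroup homomorphism $\phi : G \to \sym(X)$, $a \mapsto f_a$, constructed in the preceding theorem, viewing $\sym(X)$ as its own group-based right gyrogroup $(\sym(X), \c')$ via Proposition \ref{group based action}. Under this identification $\phi$ genuinely satisfies $\phi(a\c b) = \phi(a) \c' \phi(b)$, and the preceding theorem already identifies $G_X = \ker\phi$. Hence it suffices to show that the kernel of a right gyrogroup homomorphism into a group-based right gyrogroup is an invariant right subgyrogroup.

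I would first record the basic closure properties of $\ker\phi$. The identity clearly lies in $\ker\phi$, and applying $\phi$ to $a\c a^{-1}=e$ together with the definition of $\c'$ forces $\phi(a^{-1}) = \phi(a)^{-1}$; so $\ker\phi$ is closed under $\c$ and under inversion, hence is a right subgyrogroup of $G$.

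The key technical step is the intertwining identity $\phi\c\gyr[y,z] = \gyr_{\sym(X)}[\phi(y),\phi(z)]\c\phi$, which I would obtain by applying $\phi$ to $(x\c y)\c z = \gyr[y,z](x)\c(y\c z)$ and canceling the common factor $\phi(y\c z)$ on the right using ordinary group cancellation in $\sym(X)$. Inverting gives the analogous identity for $\gyr^{-1}$. Condition (2) of Theorem \ref{congruence} for $G_X$ now follows instantly: gyrations in $\sym(X)$ are automorphisms and hence fix the identity, so for any $x\in G_X$ and $y,z\in G$ one has $\phi(\gyr[y,z](x)) = \gyr_{\sym(X)}[\phi(y),\phi(z)](e) = e$, and likewise for $\gyr^{-1}[y,z](x)$.

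The remaining conditions reduce to short computations inside $\sym(X)$. For (3), when $x \in G_X$, $\phi(y\c(x\c z)) = \phi(y)\c'(e\c'\phi(z)) = \phi(y\c z)$, so $\phi((y\c(x\c z))\c(y\c z)^{-1}) = \phi(y\c z)\c'\phi(y\c z)^{-1} = e$. For (4), invoke (2) with $x^{-1} \in G_X$ to obtain $u := \gyr[x,y](x^{-1}) \in G_X$, whence $\phi(u)=e$; setting $w := \gyr^{-1}[u, x\c y](z)$, the intertwining identity together with the standard right gyrogroup identity $\gyr_{\sym(X)}[e,\cdot] = I$ gives $\phi(w) = \phi(z)$, so $\phi((w\c u)\c z^{-1}) = \phi(w\c u)\c'\phi(z)^{-1} = \phi(z)\c'\phi(z)^{-1} = e$ since $a\c' a^{-1} = e$ for any $a$. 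The only genuine obstacle is pinning down the intertwining identity; once that is in hand everything else is bookkeeping in the group $\sym(X)$.
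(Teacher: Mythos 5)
Your proposal is correct, but it follows a genuinely different route from the paper. The paper proves the statement by brute force at the level of the action: it checks closure of $G_X$ under inverses and products directly from the axioms of $*$, and then verifies conditions (2)--(4) of Theorem \ref{congruence} through explicit chains of computations such as $((a\circ b)\circ c)*x=(b\circ c)*x$ and $(b\circ(a\circ c))*x=(b\circ c)*x$ for $a\in G_X$ (and, for the $\gyr^{-1}$ part of condition (2), it invokes the identity $\gyr^{-1}[b,c]=\gyr[b^{-1},c^{-1}]$). You instead factor everything through the permutation representation $\phi:G\to\sym(X)$ from the preceding theorem, regard $\sym(X)$ as its $G$-based right gyrogroup via Proposition \ref{group based action}, note $G_X=\ker\phi$, and prove once and for all the intertwining identity $\phi\circ\gyr[y,z]=\gyr[\phi(y),\phi(z)]\circ\phi$ by applying $\phi$ to the right gyroassociative law and right-cancelling $\phi(y\circ z)$ (legitimate, since $u\circ' w=w^{-1}uw^{2}$ determines $u$ in the group $\sym(X)$); conditions (2)--(4) then collapse to trivial identities in $\sym(X)$, using $\phi(a^{-1})=\phi(a)^{-1}$, that group-based gyrations fix the identity, and that $\gyr[e,\cdot]=I$. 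What your approach buys is conceptual economy and generality: it actually shows that the preimage of the identity under any right gyrogroup homomorphism into a group is an invariant right subgyrogroup, it handles the $\gyr^{-1}$ clause of condition (2) cleanly without the inversion identity the paper appeals to, and it is very much in the spirit of the earlier sections of the paper (reducing gyro-statements to group statements through a homomorphism). What the paper's direct computation buys is independence from the machinery of Theorem 5.7 and Proposition \ref{group based action}: it stays entirely inside the action axioms. Both arguments are sound; yours is arguably the cleaner of the two, provided you keep the non-associativity of $\circ'$ in mind and respect the parenthesization when you push $\phi$ through the expressions in conditions (3) and (4), as you in fact do.
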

\begin{proof}
First, we show that $G_X$ is a right subgyrogroup. Let $a\in G_X$. Then $a^{-1}*x=a^{-1}*(e*(a*(a*x)))=(e\circ a)*x=a*x=x\,\,\forall\, x\in X.$  Let $a,b\in G_X.$ Then $(a\circ b^{-1})*x=b*(a*(b^{-1}*(b^{-1}*x)))=x\,\,\forall \, x\in X.$ Thus, $G_X$ is a right subgyrogroup. 
For $x\in X,$ $a\in G_X$ and $b,c\in G,$
\begin{align*}
	((a\circ b)\circ c)*x&= c^{-1}*((a\circ b)*(c*(c*x)))\\
	&=c^{-1}*(b^{-1}*(a*(b*(b*(c*(c*x)))))\\
	&=c^{-1}*(b^{-1}*(b*(b*(c*(c*x))))\\
	&=c^{-1}*((e\circ b)*(c*(c*x)))\\
	&=c^{-1}*(b*(c*(c*x)))\\
	&=(b\circ c)*x.
\end{align*}
 Therefore, 
 \begin{align*}
 	\gyr[b,c](a)*x&=((a\circ b)\circ c)\circ (boc)^{-1})*x\\
 	&=(b\circ c)*((a\circ b)\circ c))*(b\circ c)^{-1}*(b\circ c)^{-1}*x\\
 	&=(b\circ c)*( b\circ c))*(b\circ c)^{-1}*(b\circ c)^{-1}*x\\
 	&=x.
 \end{align*}
Thus,  $\gyr[b,c](a)\in G_X$ for $a\in G_X$ and $b,c\in G.$ Note that
$\gyr^{-1}[b,c](a)=\gyr[b^{-1},c^{-1}](a)$ and hence, $\gyr^{-1}[b,c](a)*x=x$.

Observe, for $a\in G_X$ and $b\in G$, $(a\circ b)*x=b*x$ and $(a\circ b)^{-1}*x=b^{-1}*x.$ Now suppose $a\in G_X$ and $b,c\in G.$
\begin{align*}
	(b\circ(a\circ c))*x&=(a\circ c)^{-1}*(b*((a\circ c)*((a\circ c)*x)))\\
	&=c^{-1}*(b*((a\circ c)*((a\circ c)*x)))\\
		&=c^{-1}*(b*((c*(c*x)))\\
		&=(b\circ c)*x.
\end{align*}	
Therefore,
\begin{align*}
(	(b\circ(a\circ c))\circ(b\circ c)^{-1})*x&=(b\circ c)*((b\circ(a\circ c))*((b\circ c)^{-1}*((b\circ c)^{-1}*x)))\\
&=(b\circ c)*((b\circ c)*((b\circ c)^{-1}*((b\circ c)^{-1}*x)))\\
&=((b\circ c)\circ (b\circ c)^{-1})*x\\
&=x.
\end{align*}

Observe that $[\gyr^{-1}[\gyr[a,b](a^{-1}), a \circ b](c)\circ \gyr[a,b](a^{-1})]\circ c^{-1}=((c\circ b)\circ(a\circ b)^{-1})\circ c^{-1}.$
Therefore,
\begin{align*}
	(((c\circ b)\circ(a\circ b)^{-1})\circ c^{-1})*x&=c*(((c\circ b)\circ(a\circ b)^{-1})*(c^{-1}*(c^{-1}*x)))\\
	&=c*((b*((c\circ b)*(b^{-1}*( b^{-1}*((c^{-1}*(c^{-1}*x))))\\
	&=c*((c\circ b)\circ b^{-1})*((c^{-1}*(c^{-1}*x))))\\
	&=c*((c*((c^{-1}*(c^{-1}*x))))\\
	&=x
\end{align*}

 Thus, $G_X$ is an invariant right subgyrogroup of $G.$ 
\end{proof}

\begin{definition}
	Let $H$ be an invariant right subgyrogroup of a right gyrogroup $G$. The set $G/H = \{H\circ g \mid  g \in G\}$ is a right gyrogroup under the following operation  
	$$(H\circ a)\circ(H\circ b)=H\circ(a\circ b),$$
	where $H\circ a, H\circ b\in G/H$. The right gyrogroup $(G/H,\circ)$ is called the quotient right gyrogroup with gyro automorphism $\gyr[(H\circ a),(H\circ b)](H\circ c)=H\circ \gyr[a,b](c).$ 
\end{definition}

\begin{proposition} Let $G$ be a right gyrogroup and $*$ be an action of $G$ on a non empty set $X$. Let $G_X=\{a \in G \mid a* x = x ~\forall x \in X \}$. Then
	$G/G_X$ is isomorphic to the right subgyrogroup $\phi(G)$ of $\sym(X)$,
	where $\phi$ is the corresponding representation.
\end{proposition}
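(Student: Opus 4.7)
The plan is to run a first-isomorphism-theorem argument in the right-gyrogroup setting. Let $\phi : G \to \sym(X)$ be the right gyrogroup homomorphism associated to the action, so that $\ker \phi = G_X$. View $\phi(G)$ as a right subgyrogroup of the $\sym(X)$-based right gyrogroup (Proposition \ref{group based action}), with operation $f \circ g = g^{-1} f g^2$. I would define
\[
\bar{\phi} : G/G_X \longrightarrow \phi(G), \qquad \bar{\phi}(G_X \circ a) = \phi(a),
\]
and show that $\bar{\phi}$ is a well-defined bijective right gyrogroup homomorphism.

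First I would verify well-definedness. By Theorem \ref{congruence}(6), the congruence induced by $G_X$ satisfies $G_X \circ a = G_X \circ b$ iff $a = h \circ b$ for some $h \in G_X$. In that case $\phi(a) = \phi(h \circ b) = \phi(b)^{-1}\phi(h)\phi(b)^2 = \phi(b)^{-1} I \phi(b)^2 = \phi(b)$, since $h \in \ker\phi$. The homomorphism property then reads
\[
\bar{\phi}\bigl((G_X\circ a)\circ (G_X\circ b)\bigr) = \phi(a\circ b) = \phi(b)^{-1}\phi(a)\phi(b)^2 = \bar{\phi}(G_X\circ a)\circ \bar{\phi}(G_X\circ b),
\]
which is exactly the $\sym(X)$-based right gyrogroup product. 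Surjectivity of $\bar{\phi}$ is immediate from the definition.

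For injectivity I would use Theorem \ref{congruence}(5): $G_X \circ a = G_X \circ b$ iff $a \circ b^{-1} \in G_X$. A short preliminary step shows $\phi(a^{-1}) = \phi(a)^{-1}$ in $\sym(X)$: applying the homomorphism identity to $a \circ a^{-1} = e$ gives $I = \phi(a^{-1})^{-1}\phi(a)\phi(a^{-1})^2$, which reduces to $\phi(a)\phi(a^{-1}) = I$. Now assume $\phi(a) = \phi(b)$; then
\[
\phi(a\circ b^{-1}) = \phi(b^{-1})^{-1}\phi(a)\phi(b^{-1})^2 = \phi(b)\,\phi(b)\,\phi(b)^{-2} = I,
\]
so $a \circ b^{-1} \in G_X$, giving $G_X \circ a = G_X \circ b$.

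The main obstacle is the bookkeeping between the two interpretations of the symbol $\circ$ on the image side: the underlying group multiplication of $\sym(X)$ versus the induced right gyrogroup operation $f \circ g = g^{-1} f g^2$ that makes $\phi(G)$ a right subgyrogroup. Once the identification is fixed via Proposition \ref{group based action}, the standard chain of checks (well-definedness, homomorphism, surjectivity, and injectivity via the kernel characterization from Theorem \ref{congruence}) closes the proof with no further surprises.
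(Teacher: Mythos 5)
Your argument is correct, and it is the natural one: the paper itself states this proposition without giving any proof, so there is nothing to compare against except the expected "first isomorphism theorem" outline, which is exactly what you carry out. All the ingredients you invoke are already available in the paper: $\ker\phi=G_X$ and $\phi(a^{-1})=\phi(a)^{-1}$ come from the theorem on actions and representations, well-definedness of $G/G_X$ comes from the preceding proposition that $G_X$ is invariant, and the coset characterizations come from the quoted congruence theorem; your careful identification of the operation on $\phi(G)$ as the $\sym(X)$-based product $f\circ g=g^{-1}fg^{2}$ (so that $\phi(a)\circ\phi(b)=\phi(b)^{-1}\phi(a)\phi(b)^{2}=\phi(a\circ b)$, whence $\phi(G)$ is indeed a right subgyrogroup) is precisely the point that needed to be made explicit. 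The only cosmetic remark is that closure under gyrations of $\phi(G)$, and preservation of gyrations by $\bar\phi$, follow automatically since $\gyr[y,z](x)=((x\circ y)\circ z)\circ (y\circ z)^{-1}$ is expressible through the operation and inverses, so your operation-preserving bijection is a full right gyrogroup isomorphism.
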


\begin{remark}
	Let $G$ be a right gyrogroup and $*$ be an action of $G$ on a non empty set $X$. The stabilizer of $x$ in G, denoted by $\textnormal{stab } x$, is defined as 
	$\textnormal{stab } x= \{a \in  G: a * x = x\}$. Then $\textnormal{stab }$ is a right subgyrogroup of $G.$ Define a relation $\sim$ on $X$ by the condition
	$x \sim y \Leftrightarrow y = a * x$ for some $a \in G$. Then
	the relation $\sim$ is reflexive, symmetric but not transitive. Then the orbit of $x$ can be defined as the equivalence class of $x$ determined by the transitive closure of the relation $\sim.$\\
\end{remark}

\noindent{\bf Conclusion \& Problems:} In this article, we have shown that GGC of a gyrogroup works as an important tool to study gyrogroup action. We propose that many important results on gyrogroups can be obtained using GGC. One of the problems can be stated as follows:
Classify all those gyrogroups for which their GGC is trivial.\\

\noindent{\bf Acknowledgment}\\

\noindent We are extremely thankful to Prof. Ramji Lal for his continuous support, discussion and encouragement. The first and the second named authors thank IIIT Allahabad and Ministry of Education, Government of India for providing institute fellowship. The third named author is thankful to IIIT Allahabad for providing SEED grant.


\end{document}